\newtheorem{theorem}{Theorem}  
\newtheorem{cor}[theorem]{Corollary}  
\theoremstyle{plain}
\newtheorem{thm}{Theorem}
\newtheorem{lemma}[thm]{Lemma}
\newtheorem{corollary}[thm]{Corollary}
\newtheorem{prop}[thm]{Proposition}
\theoremstyle{definition}
\newtheorem{definition}[thm]{Definition}
\newtheorem{remark}[thm]{Remark}
\newtheorem{exmp}[thm]{Example}
\numberwithin{equation}{section}
\numberwithin{thm}{section}
\theoremstyle{remark} 
\newtheorem*{ack}{Acknowledgements}
\title[Regularity of Resolutions and Limits of Manifolds]{Regularity of Resolutions and Limits of Manifolds with a Uniform Contractibility Function}
\author{Mohammad Alattar and Lewis Tadman}
\date{\today}
\address[Alattar]{Department of Mathematical Sciences, Durham University, United Kingdom}
\email{{mohammad.al-attar@durham.ac.uk}}
\address[Tadman]{Department of Mathematical Sciences, Durham University, United Kingdom}
\email{lewis.tadman@durham.ac.uk}
\begin{document}

\begin{abstract}
 In this paper, we give a short and self-contained proof to a 1991 conjecture by Moore concerning the structure of certain finite-dimensional Gromov--Hausdorff limits, in the ANR setting. As a consequence, one easily characterizes finite dimensional limits of PL-able or Riemannian $n$-manifolds with a uniform contractibility function. For example, one can define for any compact connected metric space that is a resolvable ANR homology manifold of covering dimension at least 5, an obstruction, which vanishes if and only if the homology manifold can be approximated in the Gromov--Hausdorff sense by PL-manifolds of the same dimension and with a uniform contractibility function. Further, it provides short proofs to certain well known results by reducing them to problems in Bing topology.  
 We also give another proof using more classical arguments that yield more structural information. We give several applications to the theory of homology manifolds, Alexandrov spaces, Wasserstein spaces and a generalized form of the diffeomorphism stability conjecture.
\end{abstract}

\maketitle

\section{Introduction}
Determining when certain spaces can be approximated by Riemannian manifolds under various constraints is a central question in Riemannian and metric geometry (see for example \cite{kawamura, hegenbarth-repovs,hegenbarthrepovsII,  cassorla,kapovitch-regularity,kapovitchregularityII,aleksandrovandzalgaller,Alexandrov,nikolaev,NikolaevI,DottI,DottII,DottIII,PhilippReiser}). The purpose of this paper is to identify when certain topological spaces can be approximated in the Gromov--Hausdorff topology by PL or Riemannian manifolds with a uniform contractibility function (see Definition \ref{contractibility function}). Classes of spaces admitting a uniform contractibility function have been well studied in comparison geometry. For example, Grove, Petersen and Wu showed that the class of closed (i.e., compact and without boundary) Riemannian $n$-manifolds with sectional curvature uniformly bounded below, diameter uniformly bounded above, and volume uniformly bounded below admits a uniform contractibility function \cite{GPW}. If one replaces ``sectional'' curvature with ``Ricci'' curvature, then Zhu \cite{zhu} showed that in dimension $3$, a result in the direction of Grove--Petersen--Wu holds true (cf. \cite{Anderson} and see \cite{bruè2024topological} for a recent generalization to Zhu's results and more). For spaces with upper curvature bounds, we refer the reader to the paper \cite{LytchakNagano2}.

To understand the structure of Gromov--Hausdorff limits of finite-dimensional compact metric spaces admitting a uniform contractibility function, it is necessary to understand the theory of cell-like maps (see Definition \ref{Cell-like maps}). Indeed, suppose $X$ is a Gromov--Hausdorff limit of closed  metric topological $n$-manifolds with a uniform contractibility function.  Grove, Petersen, and Wu \cite{GPW,Grove-Petersen-Wu-erratum} showed that if $X$ is finite dimensional, then it is a cell-like image of some manifold. If one only demands that $X$ is a limit of $n$-dimensional compact absolute neighborhood retracts (or ANR for short, see Definition \ref{anr}) with a uniform contractibility function, Moore \cite{Moore}  showed that $X$ is a cell-like image of some $n$-dimensional compactum. Ferry \cite{Ferrypolyhedra} showed if $X$ is a Gromov--Hausdorff limit of compact metric spaces with a uniform contractibility function, and with covering dimension bounded above, then $X$ is a cell-like image of some polyhedron.

In general, under non-collapsed Gromov--Hausdorff convergence of compact metric $n$-manifolds with a uniform contractibility function (see Definition \ref{contractibility function}), the limit space was shown to be, in this general setting, a resolvable homology manifold (see for example \cite{GPW,Grove-Petersen-Wu-erratum,Wu} and cf.\  \cite[p.\ 88]{problemsinhomologymanifolds}). That is, the limit space was shown to be the cell-like image of \textit{some} manifold. The next theorem shows that the manifold witnessing the resolution can be made more precise. In turn, we obtain immediately as a corollary, a proof to a  conjecture of Moore \cite{Moore} in the finite dimensional case.

\begin{theorem}
\label{main thm}
Fix $n\geq 4$. Let $\{X_i^n\}_{i=1}^{\infty}$ be a sequence of closed metric topological $n$-manifolds with a uniform contractibility function converging to $X$ in the Gromov--Hausdorff sense. If $X$ is an $\mathrm{ANR}$, then $X$ is a cell-like image of  $X_i$ for all $i$ sufficiently large. Moreover, the cell-like map can be taken to be an open map.
\end{theorem}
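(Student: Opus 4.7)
The plan is to construct, for all sufficiently large $i$, a continuous cell-like surjection $f_i\colon X_i \to X$ and then modify it to be open. I would proceed in three steps; the first two are fairly routine given the hypotheses, while the third is where the manifold structure with $n\geq 4$ becomes essential.

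\emph{Step 1: Continuous approximations.} The Gromov--Hausdorff convergence $X_i \to X$ yields (possibly discontinuous) $\epsilon_i$-isometries $h_i\colon X_i \to X$ with $\epsilon_i \to 0$. Using a standard nerve-plus-partition-of-unity construction against a fine open cover of $X$, and exploiting both the ANR structure of $X$ and the uniform contractibility function of the $X_i$, one produces genuine continuous maps $f_i\colon X_i \to X$ with $\sup_{y\in X_i} d_X(f_i(y),h_i(y))\to 0$; a small homotopy in $X$ then makes $f_i$ surjective. In particular, for $i$ large, $f_i$ is an $\eta_i$-map with $\eta_i \to 0$.

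\emph{Step 2: Cell-likeness.} To verify $\mathrm{UV}^\infty$ at each preimage $f_i^{-1}(x)$, fix a neighborhood $U$ of $f_i^{-1}(x)$ in $X_i$. By continuity pick $\tau>0$ with $f_i^{-1}(\overline{B_X(x,\tau)})\subset U$, and then pick $\sigma>0$ so small that $\rho(\sigma+\eta_i)+\eta_i<\tau$, where $\rho$ is the uniform contractibility function of the $X_i$ (possible for $i$ large because $\eta_i \to 0$). The preimage $V=f_i^{-1}(B_X(x,\sigma))$ is contained in a ball $B\subset X_i$ of radius $\sigma+\eta_i$, which contracts within the concentric ball $B'$ of radius $\rho(\sigma+\eta_i)$ by uniform contractibility. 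Since $f_i$ is nearly distance-preserving, $B'\subset f_i^{-1}(B_X(x,\tau))\subset U$, so the inclusion $V\hookrightarrow U$ is null-homotopic. Hence $f_i^{-1}(x)$ is cell-like and $f_i$ is a cell-like map.

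\emph{Step 3: Openness and the main obstacle.} The cell-like surjection $f_i$ induces an upper semicontinuous decomposition $\mathcal{G}_i=\{f_i^{-1}(x):x\in X\}$ of the $n$-manifold $X_i$ into cell-like sets of vanishingly small diameter. Here I would invoke Bing-type shrinking theorems, valid on topological $n$-manifolds for $n\geq 4$ (the Edwards/Siebenmann machinery for $n\geq 5$ and the Quinn--Freedman extension for $n=4$), to reparameterize $\mathcal{G}_i$ by a nearby cell-like decomposition whose induced quotient map $\widetilde{f}_i\colon X_i\to X$ is open. This is precisely where the theorem makes contact with Bing topology, as hinted by the authors, and where the hypothesis $n\geq 4$ is genuinely used. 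I expect this to be the technically delicate step: cell-likeness in Step~2 follows almost formally from uniform contractibility together with the ANR property, but openness of the cell-like map demands a genuine shrinking argument inside the manifold $X_i$, rather than merely a homotopy-theoretic construction.
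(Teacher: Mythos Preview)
Your Step~2 contains a genuine gap, and it is precisely the obstacle that makes the theorem nontrivial. For a \emph{fixed} large $i$ you must verify the $\mathrm{UV}^\infty$ condition for \emph{every} neighborhood $U$ of $f_i^{-1}(x)$. But the $\tau>0$ you extract from $U$ can be arbitrarily small, whereas the distortion $\eta_i$ of $f_i$ is a fixed positive number once $i$ is chosen. Your required inequality $\rho(\sigma+\eta_i)+\eta_i<\tau$ forces $\tau>\rho(\eta_i)+\eta_i\geq 2\eta_i$ no matter how small $\sigma$ is, so for thin neighborhoods $U$ the argument simply cannot start. Your parenthetical ``possible for $i$ large because $\eta_i\to 0$'' conflates the two quantifiers: $i$ is fixed before $U$ (hence before $\tau$) is chosen. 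What you have actually shown is that point-preimages of $f_i$ contract inside balls of radius roughly $\rho(\eta_i)$---i.e.\ that $f_i$ is an $\epsilon_i$-equivalence with $\epsilon_i\to 0$---which is much weaker than cell-likeness. Concretely, nothing in your construction rules out a fold-type map whose generic preimage is a finite set of several points; such a map is a fine $\epsilon$-equivalence but is not cell-like.

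This is exactly why both proofs in the paper take a different route. The first proof never claims that the Petersen approximations $f_i$ are cell-like; instead it invokes the Grove--Petersen--Wu stability theorem to show that, for large $i$, the manifold $X_i$ is \emph{homeomorphic} to a fixed manifold $M$ already known to resolve $X$ (via an ``alternating trick'' that intertwines $\{X_i\}$ with metrics on $M$ converging to $X$), and then composes that homeomorphism with the resolution $M\to X$. The second proof also starts from your Step~1 maps $f_i$, but rather than asserting any single $f_i$ is cell-like it uses Quinn's thin $h$-cobordism theorem to produce homeomorphisms $h_i\colon X_{\eta_i}\to X_{\eta_{i+1}}$ that nearly commute over $X$, and then takes the \emph{uniform limit} of the compositions $f_{\eta_k}\circ h_{k-1}\circ\cdots\circ h_0$; this limit is cell-like because it is a limit of $\delta_k$-equivalences with $\delta_k\to 0$. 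In both cases the passage from ``$\epsilon$-equivalence for some small $\epsilon$'' to ``cell-like'' is where the controlled topology (GPW stability, or the thin $h$-cobordism theorem) is genuinely used, and this is what your Step~2 attempts to bypass. The openness assertion in Step~3 is a separate, comparatively soft issue: once a cell-like map to the ANR $X$ exists, Walsh's theorem supplies an open cell-like map with the same source and target; the shrinking theorems you cite are not the relevant tool here.
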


We present two proofs of Theorem \ref{main thm} using different, but intersecting, techniques.  The first proof is short, and uses an ``alternating trick'' which has found applications elsewhere (see for example Section \ref{section examples}).

The second proof modifies the Grove--Petersen--Wu stability theorem \cite{GPW,Grove-Petersen-Wu-erratum} by incorporating a convergence argument to construct resolutions from controlled surgery \cite{topologyofhomologymanifolds,topofhommfldserratum}.  This proof refines the first proof by giving a more concrete construction of the cell-like map appearing in the Theorem above. In particular, it gives more insight on the relationship between the limiting space and the approximating manifolds.

The ideas surrounding the convergence technique also appear in a slightly different form in a proof of the Reifenberg theorem for metric spaces due to Cheeger and Colding \cite{CheegerColdingI,CheegerColdingII,CheegerColdingIII} (see \cite{Reifenbergexposition} for an exposition of this theorem).

Theorem~\ref{main thm} yields a proof to a conjecture by Moore in the finite dimensional case \cite[p.\ 418]{Moore}. In what follows, dimension will be understood to be covering dimension. Further, we will denote by $\mathrm{LGC}(n,\rho)$ the class of compact metric spaces with a uniform contractibility function that are $n$-locally geometrically contractible (see Definition \ref{LGC} for the precise definition). We denote by $\mathrm{LGC}(\rho)$ the union of all the $\mathrm{LGC}(n,\rho)$.
\begin{cor}
\label{conjecturebymooreII}
Fix $n\geq 4$. Assume $\omega\colon [0,1]\rightarrow \mathrm{LGC}(n,\rho)$ is a continuous path such that $\omega(t)$ for $0\leq t<1$ is a closed $n$-dimensional manifold. If $\omega(1)$ is an $\mathrm{ANR}$ then $\omega(1)$ is a cell-like image of $\omega(0)$.
\end{cor}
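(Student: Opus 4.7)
The plan is to reduce Corollary \ref{conjecturebymooreII} to a single invocation of Theorem \ref{main thm} near $t=1$, after establishing that the homeomorphism type of $\omega(t)$ is locally (hence globally) constant on $[0,1)$.

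First I would pick a sequence $t_i \nearrow 1$. By continuity of $\omega$ in the Gromov--Hausdorff topology, $\omega(t_i)\to\omega(1)$; each $\omega(t_i)$ is a closed topological $n$-manifold; all members of the sequence share the uniform contractibility function built into the class $\mathrm{LGC}(n,\rho)$; and $\omega(1)$ is an $\mathrm{ANR}$ by hypothesis. Theorem \ref{main thm} therefore yields, for every sufficiently large $i$, an open cell-like surjection $f_i\colon \omega(t_i)\twoheadrightarrow \omega(1)$.

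It remains to identify $\omega(t_i)$ with $\omega(0)$ up to homeomorphism for one such $i$. For this, fix any $s\in[0,1)$: because $\omega(s)$ is itself a closed $n$-manifold (in particular an $\mathrm{ANR}$), Theorem \ref{main thm} applied to any sequence $\omega(u_k)$ with $u_k\in[0,1)$ and $u_k\to s$ produces cell-like maps $\omega(u_k)\to\omega(s)$ for all sufficiently large $k$. A cell-like map between closed topological $n$-manifolds is a near-homeomorphism for $n\geq 4$ (Siebenmann's cell-like approximation theorem in dimensions $\geq 5$, together with the corresponding four-dimensional result of Quinn), so $\omega(u_k)\cong \omega(s)$ eventually. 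Thus the homeomorphism type of $\omega(\cdot)$ is constant on an open neighbourhood of each $s\in[0,1)$, and since $[0,1)$ is connected it is constant on the whole half-open interval. In particular $\omega(0)\cong\omega(t_i)$ for the indices $i$ already selected, and composing this homeomorphism with $f_i$ exhibits $\omega(1)$ as a cell-like image of $\omega(0)$, as required.

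The main obstacle is the cell-like approximation step when $n=4$: Siebenmann's theorem handles $n\geq 5$ cleanly, but dimension four has to rely on the more delicate topological four-manifold techniques. This is the same low-dimensional subtlety that already makes Theorem \ref{main thm} non-trivial at $n=4$, and in carrying out the argument I would quote whichever formulation the authors themselves adopt for the manifold regime. With that ingredient in hand, no further geometric input beyond Theorem \ref{main thm} and the connectedness of $[0,1)$ is needed.
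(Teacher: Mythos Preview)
Your argument is correct and follows the same template as the paper's proof: pick a sequence $\omega(t_i)\to\omega(1)$, invoke Theorem~\ref{main thm} to get a cell-like map $\omega(t_i)\to\omega(1)$, and then use constancy of the homeomorphism type on $[0,1)$ to replace $\omega(t_i)$ by $\omega(0)$. The only difference is in that last step: the paper simply cites \cite[Proposition~3.1]{Moore} for the constancy, whereas you re-derive it by applying Theorem~\ref{main thm} again at interior points and then Siebenmann's (resp.\ Quinn's, for $n=4$) cell-like approximation theorem---a perfectly valid and slightly more self-contained route.
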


The results above yield short proofs to well-known results that concern certain non-collapsed limits of metric manifolds. More precisely, they allow one to divorce the controlled homotopy theory surrounding ``stability'' arguments, and instead allow one to appeal more directly to classical decomposition space theory. For example, a common argument in stability arguments is to consider a sequence of closed metric $n$-manifolds with a uniform contractibility function $\{X_i^n\}_{i=1}^{\infty}$ that converge to some finite dimensional space $X$ (see for example \cite[Corollary 7.2]{LytchakNagano2} and cf.\ \cite{fujiokaandgu, GPW,Grove-Petersen-Wu-erratum}).  Provided the space $X$ is a manifold, a fundamental theorem due to Petersen \cite{petersenfinitenesstheoremformetricspaces}, in conjunction with the $\alpha$-approximation theorem due to Chapman and Ferry \cite{ChapmanFerry}, implies that $X_i$ and $X$ are homeomorphic for sufficiently large $i$. Assuming $n\geq 5$, one can obtain the same consequence more directly using Theorem \ref{main thm}. For example, if $X$ satisfies the disjoint disk property, then $X$ is homeomorphic to $X_i$ by Edward's theorem \cite{Robert-Edwards}. Moreover, the homeomorphisms can be chosen to approximate the cell-like map uniformly (see Theorem \ref{Edward's theorem}). We mention other similar consequences below.

For the next corollary, denote by $\mathcal{M}(n,\rho)$ the class of closed topological metric $n$-manifolds with a uniform contractibility function $\rho$. We say that a topological manifold \textit{admits a PL structure} if it is homeomorphic to a PL-manifold.

\begin{cor}
\label{approxbyPL}
 Let $(X^n,d)$ be an $n$-dimensional compact metric space. If $n\geq 4$, then the following statements are equivalent:

 \begin{enumerate}
     \item $X$  is a cell-like image of a closed PL $n$-manifold.
     \item There exist closed  manifolds admitting PL-structures  $\{X_i^n\}_{i=1}^{\infty}$ in $\mathcal{M}(n,\rho)$ such that $X_i\rightarrow_{GH}(X,d)$.
\end{enumerate}
\end{cor}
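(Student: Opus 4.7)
I would prove the two implications separately.

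For $(2) \Rightarrow (1)$, the argument is essentially immediate from Theorem~\ref{main thm}. Since the class $\mathrm{LGC}(n,\rho)$ is preserved under Gromov--Hausdorff convergence, the limit $X$ lies in $\mathrm{LGC}(n,\rho)$, and hence $X$ is an $\mathrm{ANR}$ by the classical Borsuk-type criterion for finite-dimensional compact uniformly locally contractible metric spaces. Theorem~\ref{main thm} then supplies, for all $i$ sufficiently large, an open cell-like surjection $X_i \to X$. Each such $X_i$ is homeomorphic to a closed PL $n$-manifold, and precomposing with such a homeomorphism yields the resolution claimed in (1).

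For $(1) \Rightarrow (2)$, let $f \colon M \to X$ be a cell-like surjection from a closed PL $n$-manifold $M$; the task is to produce the approximating sequence. Fixing any PL metric $d_M$ on $M$, my first attempt is the hybrid metric
\[
  d_i(x,y) := d_X(f(x), f(y)) + \tfrac{1}{i}\, d_M(x, y).
\]
A direct check shows that $d_i$ is a metric and that $f \colon (M, d_i) \to (X, d_X)$ is a surjective $\tfrac{1}{i}\diam(M, d_M)$-isometry; in particular $(M, d_i) \to (X, d)$ in the Gromov--Hausdorff sense. Moreover, $X$ is a cell-like image of a PL manifold, hence a resolvable $\mathrm{ANR}$ homology $n$-manifold, and in particular admits a uniform contractibility function $\rho_X$. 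Using that $f$ is a hereditary shape equivalence between compact finite-dimensional $\mathrm{ANR}$s, the null-homotopy of any $X$-ball inside $B_X(\cdot, \rho_X(r))$ lifts to a null-homotopy of $B_{d_i}(x, r) \subseteq f^{-1}(B_X(f(x), r))$ inside $f^{-1}(B_X(f(x), \rho_X(r)))$, giving preliminary contractibility control.

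The hard part will be producing a \emph{single} uniform contractibility function $\rho'$ valid for all $i$ simultaneously. The obstruction is that $f^{-1}(B_X(f(x), \rho_X(r)))$ contains entire cell-like fibers whose $d_M$-diameters are generically bounded below by a positive constant, so the naive estimate yields contraction $d_i$-diameters of order $\rho_X(r) + O(1/i)$, which does not vanish at $r = 0$ uniformly in $i$. To circumvent this I would refine the construction by choosing, for each $i$, a sufficiently fine PL triangulation $T_i$ of $M$ of mesh tending to $0$ and defining a PL metric $d_i$ whose edge lengths along $T_i$ approximate $d_X \circ f$ up to a small perturbation $\epsilon_i \to 0$ that ensures Euclidean realisability of each simplex. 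Then each $(M, d_i)$ is a PL $n$-manifold converging to $X$ in the Gromov--Hausdorff sense, and its contractibility function is dominated directly by $\rho_X$ (up to an $i$-independent correction) because the metric is built from the $X$-side data along a sufficiently fine triangulation.
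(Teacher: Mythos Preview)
Your argument for $(2)\Rightarrow(1)$ is correct and is exactly the paper's: once you note that the finite-dimensional limit $X$ inherits membership in $\mathrm{LGC}(n,\rho)$ and is therefore an ANR (Proposition~\ref{anrcharac}), Theorem~\ref{main thm} gives the cell-like map from $X_i$ for large $i$.

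For $(1)\Rightarrow(2)$ you are working much harder than necessary, and the hard part you isolate is not actually resolved in your sketch. The paper obtains this direction in one line by invoking Corollary~\ref{refinementofmoore'sthm} (Moore's theorem, refined so that the target metric $(X,d)$ is prescribed): the mapping cylinder $M_f$ carries a metric extending $d$, and the level sets $\omega(t)=\mathrm{proj}^{-1}(t)$ for $t<1$ are copies of $M$ which, by Moore's argument, lie in a single $\mathrm{LGC}(n,\rho)$ and converge to $(X,d)$. Your hybrid metrics $d_i=d_X\!\circ\! f+\tfrac1i d_M$ do give Gromov--Hausdorff convergence, but, as you notice, the contractibility radius picks up a term of order $\tfrac1i\cdot\diam_{d_M}(f^{-1}(\text{small ball}))$, which does not vanish with $r$ uniformly in $i$. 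Your proposed repair via fine PL triangulations with edge lengths approximating $d_X\!\circ\! f$ only pushes the problem around: you still need to show that small $d_i$-balls contract inside sets whose $d_i$-diameter goes to $0$ with $r$ \emph{independently of $i$}, and the assertion that this ``is dominated directly by $\rho_X$'' is precisely the content of Moore's mapping-cylinder argument, not a consequence of the triangulation data alone. So either cite Corollary~\ref{refinementofmoore'sthm} directly, or recognise that what you are trying to build by hand is the mapping-cylinder metric and prove uniform contractibility along the lines of \cite{Moore}.
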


The implication $(1)\implies(2)$ follows immediately from Corollary \ref{refinementofmoore'sthm}. The implication $(2)\implies (1)$ follows immediately from Theorem \ref{main thm}.

Corollary \ref{approxbyPL} above can be rephrased by replacing $(1)$ with the vanishing of a certain obstruction in $H^4(X,\mathbb{Z}_2)$. Before proceeding, recall that a metric space $X$ with covering dimension $n$ is \emph{resolvable} if there exists an $n$-dimensional manifold $M$ and a cell-like map $f\colon M \to X$ (see Definition~\ref{def-of-resolution}). Examples of resolvable homology manifolds include, for example, Alexandrov spaces that are homology manifolds \cite{Wu}, spaces with upper curvature bounds that are homology manifolds \cite{LytchakNagano2} and homology manifolds that are locally Busemann spaces of non-positive curvature \cite{fujiokaandgu}. 

\begin{cor}
\label{obstructionvanishing}
Let $(X^n,d)$ be a compact connected metric space that is a finite dimensional ANR resolvable homology manifold with $n\geq 5$. Then there exists $\Delta(X^n)\in H^4(X^n;\mathbb{Z}_2)$ such that the following statements are equivalent:

\begin{enumerate}
    \item $\Delta(X^n)=0$.
    \item There exist metric topological manifolds $\{M_i^n\}_{i=1}^{\infty}$ admitting PL structures and a uniform contractibility function such that $M_i^n\rightarrow_{GH}(X^n,d)$.

\end{enumerate}
\end{cor}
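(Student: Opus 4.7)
\textbf{Proof plan for Corollary \ref{obstructionvanishing}.}
The plan is to take $\Delta(X^n)$ to be the Kirby--Siebenmann class of any topological manifold resolving $X^n$, transported to $X^n$ along the resolution map. Since $X^n$ is a resolvable ANR homology manifold of dimension $n\geq 5$, by definition there is a closed topological $n$-manifold $M$ and a cell-like map $f\colon M\to X^n$. Cell-like maps between finite-dimensional ANRs are hereditary shape equivalences and in particular induce isomorphisms $f^\ast\colon H^\ast(X^n;\mathbb{Z}_2)\xrightarrow{\cong} H^\ast(M;\mathbb{Z}_2)$ in \v{C}ech cohomology. I would therefore define
\begin{equation*}
\Delta(X^n)\;:=\;(f^\ast)^{-1}\!\bigl(\mathrm{ks}(M)\bigr)\in H^4(X^n;\mathbb{Z}_2),
\end{equation*}
where $\mathrm{ks}(M)\in H^4(M;\mathbb{Z}_2)$ is the usual Kirby--Siebenmann obstruction, whose vanishing is equivalent to $M$ admitting a PL structure (here $n\geq 5$). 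Well-definedness, i.e.\ independence of the choice of resolution, would follow from Quinn's resolution-uniqueness: any two resolutions $f_1\colon M_1\to X$, $f_2\colon M_2\to X$ of a resolvable ANR homology manifold in dimensions $\geq 5$ are related by an $\varepsilon$-controlled homotopy equivalence, under which Kirby--Siebenmann classes pull back to each other by naturality.

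For the implication $(1)\Longrightarrow (2)$, assume $\Delta(X^n)=0$. Then for any resolution $f\colon M\to X^n$, $\mathrm{ks}(M)=f^\ast \Delta(X^n)=0$, so $M$ carries a PL structure. Thus $X^n$ is a cell-like image of a closed PL $n$-manifold. Now I would invoke the implication $(1)\Rightarrow (2)$ of Corollary \ref{approxbyPL} (i.e.\ the ``PL approximation'' consequence of Corollary \ref{refinementofmoore'sthm}) to produce a sequence of closed PL $n$-manifolds $\{M_i^n\}$ in $\mathcal{M}(n,\rho)$ with $M_i^n\to_{GH}(X^n,d)$.

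For the converse $(2)\Longrightarrow (1)$, suppose $\{M_i^n\}$ are closed metric $n$-manifolds admitting PL structures with a uniform contractibility function and $M_i^n\to_{GH}(X^n,d)$. By Theorem \ref{main thm}, for all sufficiently large $i$ there is a cell-like map $g_i\colon M_i^n\to X^n$. Since $M_i^n$ is PL, $\mathrm{ks}(M_i^n)=0$, and by the construction of $\Delta$ via any resolution (here, $g_i$ itself plays the role of resolution, using well-definedness from Step 1 above), $g_i^\ast\Delta(X^n)=\mathrm{ks}(M_i^n)=0$. Because $g_i^\ast$ is an isomorphism on $H^4(-;\mathbb{Z}_2)$, this forces $\Delta(X^n)=0$.

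The main obstacle I anticipate is not either implication, which are essentially formal given Theorem \ref{main thm} and the Kirby--Siebenmann machinery, but rather the well-definedness of $\Delta(X^n)$: one must ensure that the Kirby--Siebenmann class transported from one resolution agrees with that transported from any other. This is where the resolvability hypothesis together with Quinn's uniqueness of resolutions in dimension $\geq 5$ are essential. In particular, the same step explains why $\Delta$ is a genuine invariant of the homeomorphism type of $X^n$ and is independent of both $d$ and the sequence $\{M_i^n\}$ appearing in $(2)$.
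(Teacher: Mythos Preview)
Your proof is correct and follows essentially the same route as the paper: define $\Delta(X^n)$ as the Kirby--Siebenmann class of a resolving manifold (transported along the cell-like map), appeal to Quinn's uniqueness of resolutions for well-definedness, and reduce the equivalence $(1)\Leftrightarrow(2)$ to Corollary~\ref{approxbyPL}. One point to tighten: for well-definedness you should use that Theorem~\ref{uniquenessofresolutions} actually yields a \emph{homeomorphism} $h\colon M_1\to M_2$ with $f_2\circ h$ $\varepsilon$-close (hence homotopic, since $X$ is an ANR) to $f_1$---the Kirby--Siebenmann class is natural under homeomorphisms but not under arbitrary homotopy equivalences, so phrasing Quinn's output merely as an ``$\varepsilon$-controlled homotopy equivalence'' would not by itself justify $h^\ast\mathrm{ks}(M_2)=\mathrm{ks}(M_1)$.
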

To the best of the authors' knowledge, the Corollary above is the first result that relates topological obstructions to existence of certain Gromov--Hausdorff approximations. The obstruction above is easy to define. The non-trivial part is to show that it is well-defined. This will rely on the work of Quinn \cite{QuinnEndsofMaps}. In low dimensions, provided the metric $d$ is geodesic, then the PL-able manifolds can be taken to be Riemannian (see Remark \ref{low dimensions}).

Next, we investigate when a space $X$ can be approximated by closed Riemannian $n$-manifolds with a uniform contractibility function in higher dimensions. We restrict our attention to dimensions at least $5$. In dimensions at least $3$,  Ferry and Okun \cite{ferryandborus} obtained results in this direction and we will rely on their work. In dimension $2$, Dott has analyzed the situation at length \cite{DottI,DottII,DottIII}.

\begin{cor}
\label{corollary smoothable}
Let $(X^n,d)$ be a compact geodesic space with covering dimension $n$. If $n\geq 5$, then the following assertions are equivalent:

\begin{enumerate}
    \item $X$ is resolvable by a smooth manifold.
    \item $X\times \mathbb{R}^k$ admits a smooth structure for some $k\geq 2$.
    \item There exist closed connected Riemannian manifolds $\{X_i^n\}_{i=1}^{\infty}$ in $\mathcal{M}(n,\rho)$ such that $X_i\rightarrow_{GH}(X,d)$.
\end{enumerate}

\end{cor}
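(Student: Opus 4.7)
I would organize the argument as $(3)\Rightarrow(1)$, $(1)\Leftrightarrow(2)$, and $(1)\Rightarrow(3)$, using Theorem~\ref{main thm} for the first implication and the Ferry--Okun Riemannian approximation construction \cite{ferryandborus} for the last.

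\emph{The implication $(3)\Rightarrow(1)$.} The hypothesis $X_i\in\mathcal{M}(n,\rho)\subset\mathrm{LGC}(n,\rho)$ is inherited by Gromov--Hausdorff limits, so $X$ lies in $\mathrm{LGC}(n,\rho)$; combined with compactness and the assumption that the covering dimension of $X$ is exactly $n$, a standard ANR criterion (finite-dimensional compact LGC) makes $X$ an ANR. Theorem~\ref{main thm} then produces, for all sufficiently large $i$, a cell-like map $X_i\to X$, and since each $X_i$ is Riemannian (hence smooth), $X$ is resolvable by a smooth manifold.

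\emph{The equivalence $(1)\Leftrightarrow(2)$.} For $(1)\Rightarrow(2)$, take a cell-like map $f\colon M^n\to X$ with $M$ smooth and consider $f\times\mathrm{id}_{\mathbb{R}^k}\colon M\times\mathbb{R}^k\to X\times\mathbb{R}^k$. This is cell-like between spaces of dimension $n+k\geq 7$; for $k\geq 2$, the extra Euclidean factors force the target to satisfy the disjoint disks property (a standard argument pushing generic disks apart using the $\mathbb{R}^k$ coordinate). By Edwards's cell-like approximation theorem \cite{Robert-Edwards}, $f\times\mathrm{id}$ is a near-homeomorphism, so $X\times\mathbb{R}^k\cong M\times\mathbb{R}^k$ as topological spaces, and the smooth structure transports. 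For $(2)\Rightarrow(1)$, a smooth structure on $X\times\mathbb{R}^k$ makes the product a smooth manifold; Quinn's resolution theory \cite{topologyofhomologymanifolds,topofhommfldserratum} then yields a topological resolution $N^n\to X$, and the Kirby--Siebenmann/smoothing obstruction of $N$ (measured after stabilization by $\mathbb{R}^k$) agrees with that of $X\times\mathbb{R}^k$, which vanishes by hypothesis. For $n\geq 5$, classical stable smoothing theory then equips $N$ with a smooth structure compatible with the resolution.

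\emph{The implication $(1)\Rightarrow(3)$ and the main obstacle.} Given a smooth resolution $M^n\to X$, I would invoke Ferry and Okun's construction \cite{ferryandborus} of Riemannian approximations to cell-like images of smooth manifolds, producing Riemannian metrics on $M$ whose GH classes converge to $(X,d)$ and which admit a common uniform contractibility function. The main obstacle is the delicate step in $(2)\Rightarrow(1)$: producing a \emph{smooth} (not merely topological) resolution of $X$ from a smooth structure on the stabilized product. This requires careful bookkeeping of the smoothing invariant under stabilization and the full force of the dimension hypothesis $n\geq 5$. A secondary subtlety is verifying that the approximating Riemannian metrics produced by Ferry--Okun all lie in a single class $\mathcal{M}(n,\rho)$; this is implicit in their construction but warrants explicit checking, and the geodesic hypothesis on $(X,d)$ should enter here to control the limiting contractibility function.
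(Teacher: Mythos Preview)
Your proposal is essentially correct and tracks the paper's proof closely; the ingredients (Theorem~\ref{main thm}, Quinn's equivalence, Ferry--Okun) are exactly the ones the paper uses, and your extra care in $(3)\Rightarrow(1)$ verifying that $X$ is an ANR (finite-dimensional and in $\mathrm{LGC}(n,\rho)$, hence Proposition~\ref{anrcharac} applies) is a point the paper leaves implicit.

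The one place where the paper is sharper is precisely what you flag as ``the main obstacle,'' namely $(2)\Rightarrow(1)$. You phrase it in terms of matching stabilized smoothing obstructions, which is morally right but vague. The paper's route is cleaner and removes the bookkeeping: from $(2)$, Quinn's Theorem~\ref{quin equiv} gives a topological resolution $f\colon M^n\to X$; then $f\times\mathrm{id}\colon M\times\mathbb{R}^k\to X\times\mathbb{R}^k$ is a cell-like map onto a manifold (again by Theorem~\ref{quin equiv}), so Siebenmann's Theorem~\ref{siebenmann's theorem} (or your Edwards/DDP argument) yields a homeomorphism $M\times\mathbb{R}^k\cong X\times\mathbb{R}^k$. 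Now the given smooth structure on $X\times\mathbb{R}^k$ transports to $M\times\mathbb{R}^k$, and the Product Structure Theorem (Theorem~\ref{Product Structure Theorem}) directly hands you a smooth structure on $M$. No obstruction-theoretic comparison is needed; the homeomorphism plus the Product Structure Theorem do all the work. The paper in fact folds this step into $(2)\Rightarrow(3)$ and then invokes Ferry--Okun, arranging the argument as the cycle $(1)\Rightarrow(2)\Rightarrow(3)\Rightarrow(1)$ rather than your $(1)\Leftrightarrow(2)$ plus $(1)\Leftrightarrow(3)$.
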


 Note that there exists spaces $X$ that are not manifolds but $X\times \mathbb{R}^k$ is a manifold. In fact, it is  even the case that some of these spaces  have no manifold points whatsoever (see Example \ref{GhastlyExamples}). In dimension $4$, the analogous statement to Corollary \ref{corollary smoothable} is false (see Example \ref{counterexampleindimension4}). We point out that one cannot strengthen the uniform contractibility condition in item $(3)$ of Corollary \ref{corollary smoothable}  to a condition where the approximating manifolds have a uniform lower curvature bound, even if $X$ is assumed to be a topological manifold with a lower sectional curvature bound in the sense of Alexandrov (see Example \ref{vitali}).

For the next corollary, we need to define the notion of being \emph{CE-related}, which was introduced in \cite{DFW}. We say that two compact metric spaces $M$ and $N$ are \textit{$CE$-related}, if there exists a topological space $Z$, cell-like maps $q\colon N\rightarrow Z$, $p\colon M\rightarrow Z$, and a homotopy equivalence $f\colon N\rightarrow M$ such that $p\circ f$ is homotopic to $q$.  We refine this definition by declaring that $M$ and $N$ are \textit{CE-related over $X$} if $Z$ in the definition of CE-related is $X$. The next corollary is a refinement of Theorem 6.4 in \cite{DFW}, which asserts that if $n\geq 6$, then,  under the hypotheses of Corollary~\ref{DFWcor}, $M^n$ and $N^n$ are CE-related. The argument in \cite{DFW} works when $X$ is infinite-dimensional, whereas ours requires $X$ to be finite-dimensional. On the other hand, our proof does not use controlled surgery, holds for dimensions $n\geq 4$ rather than $n\geq 6$, and 
is more precise regarding the space over which the approximating manifolds are CE-related, showing that the approximating manifolds are eventually CE-related over the limit space.

In what follows, the notation $\partial{\mathcal{M}}(n,\rho)$ denotes the boundary of $\mathcal{M}(n,\rho)$ in the Gromov--Hausdorff topology.

\begin{cor}
\label{DFWcor}
Fix $n\geq 4$. Assume $X^n$ is a $n$ dimensional compact metric space such that $X\in \partial{\mathcal{M}(n,\rho)}$. Then there exists an $\epsilon>0$ such that, if $M,N\in \mathcal{M}(n,\rho)$ are in an $\epsilon$-neighborhood of $X$ in the Gromov--Hausdorff topology, then $M$ and $N$ are $CE$-related over $X$.
\end{cor}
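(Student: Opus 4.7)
The plan is to apply Theorem~\ref{main thm} to produce cell-like maps $p\colon M\to X$ and $q\colon N\to X$, and then synthesize the required homotopy equivalence $f\colon N\to M$ by composing $q$ with a homotopy inverse of $p$. The first task is to verify that the limit $X$ is an ANR, which is required to appeal to Theorem~\ref{main thm}. Since $X$ has covering dimension exactly $n$ and is a Gromov--Hausdorff limit of elements of $\mathcal{M}(n,\rho)$, the uniform contractibility function passes to the limit, so $X \in \mathrm{LGC}(n,\rho)$. A finite-dimensional compactum with a uniform contractibility function is an ANR by a classical theorem of Borsuk, so $X$ is an ANR.

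Next, I would upgrade Theorem~\ref{main thm} from a statement about a fixed convergent sequence to one about a small Gromov--Hausdorff neighborhood: there exists $\epsilon > 0$ such that every $M \in \mathcal{M}(n,\rho)$ with $d_{GH}(M,X) < \epsilon$ admits an open cell-like map $p_M\colon M\to X$. If no such $\epsilon$ existed, one could extract a sequence $M_i \in \mathcal{M}(n,\rho)$ with $d_{GH}(M_i,X) \to 0$ admitting no such cell-like map, contradicting Theorem~\ref{main thm} applied to $\{M_i\}$. For $M,N$ in such an $\epsilon$-neighborhood, one then obtains cell-like maps $p\colon M\to X$ and $q\colon N\to X$. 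Because cell-like maps between ANRs are homotopy equivalences (a classical result of Lacher), both $p$ and $q$ are homotopy equivalences. Fixing a homotopy inverse $\bar{p}\colon X\to M$ to $p$ and setting $f = \bar{p}\circ q\colon N\to M$, we see that $f$ is a composition of homotopy equivalences, hence itself one, and $p\circ f = p\circ \bar{p}\circ q \simeq q$, so $M$ and $N$ are CE-related over $X$.

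The principal obstacle is the ANR step: one has to be comfortable that finite-dimensional Gromov--Hausdorff limits of elements of $\mathcal{M}(n,\rho)$ inherit enough local geometric contractibility to be ANRs, since this is precisely what unlocks Theorem~\ref{main thm}. Everything else is essentially formal. Notably, this approach bypasses any use of controlled surgery, which is what permits the result to hold down to dimension $n=4$, in contrast to the $n\geq 6$ hypothesis in \cite{DFW}; it is also sharper than \cite[Theorem 6.4]{DFW} in that the intermediate space in the CE-relation is identified explicitly as the limit $X$ itself, rather than as some abstract compactum.
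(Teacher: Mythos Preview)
Your argument is correct, and it takes a genuinely different (and in one sense more economical) route than the paper's. Both you and the paper first verify that $X$ is an ANR (finite-dimensional plus a contractibility function, via Proposition~\ref{anrcharac}), and both then invoke Theorem~\ref{main thm} to produce cell-like maps $p\colon M\to X$ and $q\colon N\to X$ once $M,N$ are close enough to $X$. The divergence is in how the homotopy equivalence $f\colon N\to M$ is produced. You simply take a homotopy inverse $\bar p$ of $p$ (using that cell-like maps between ANRs are homotopy equivalences) and set $f=\bar p\circ q$; the required homotopy $p\circ f\simeq q$ is then immediate. The paper instead appeals to Quinn's uniqueness of resolutions (Theorem~\ref{uniquenessofresolutions}) to obtain a \emph{homeomorphism} $f\colon N\to M$ with $c_1\circ f$ arbitrarily close to $c_2$, and then uses the ANR property of $X$ to promote closeness to a homotopy.

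Your approach is lighter: it avoids an additional invocation of Quinn's deep uniqueness theorem and uses only the soft fact that CE maps between ANRs are homotopy equivalences. The paper's approach, on the other hand, yields a strictly stronger conclusion that is not recorded in the statement of the corollary, namely that the homotopy equivalence $f$ in the definition of ``CE-related over $X$'' can be taken to be a homeomorphism, and indeed one that nearly commutes with the two resolutions. So your proof buys simplicity; theirs buys extra structural information.
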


We organize this paper as follows. In Section~\ref{section preliminaries and historical remarks}, we discuss the preliminaries and make some historical remarks. This section contains essentially no new material. In Section~\ref{section proof of Thm}, we prove Theorem~\ref{main thm} and Corollary~\ref{conjecturebymooreII}. In Section~\ref{section proof of corollaries}, we prove the rest of the corollaries. In Section~\ref{section examples}, we discuss more examples and give more applications which, to the best of our knowledge, are not found in the literature.

\begin{ack}
The authors thank their advisor Fernando Galaz-García, as well as Martin Kerin, Wilhelm Klingenberg,  Mauricio Che, Alexander Jackson, and Julio Quijas Aceves for their support, insights, and suggestions. They are also grateful to Philipp Reiser for a helpful observation and several comments, and to Shmuel Weinberger for valuable comments during the early stages of this project. The authors further thank Wolfgang Lück, Washington Mio, Mark Powell, and Dirk Schütz for sharing their expertise and clarifying several points on topology. M. Alattar is also thankful to Alexander Lytchak for several enlightening mathematical discussions including the Product Structure theorem, cell-like maps, metric geometry and for many helpful comments.

Part of this work was carried out while M.\ Alattar was a research fellow and participant in the Trimester Program on Metric Analysis at the Hausdorff Research Institute for Mathematics in Bonn, where he was supported by the Deutsche Forschungsgemeinschaft (DFG, German Research Foundation) under Germany’s Excellence Strategy – EXC-2047/1 – 390685813. He gratefully acknowledges the opportunity, the excellent working conditions, and the stimulating mathematical discussions.

\end{ack}

\section{Preliminaries}

\label{section preliminaries and historical remarks}

In this section, we recall basic material on the theory of absolute neighborhood retracts (ANR's for short), the theory of homology manifolds, and the theory of cell-like maps. We also recall the basic results on resolutions of homology manifolds and controlled topology that we will use in subsequent sections. For further details, we refer the reader to \cite{Daverman,LacherI,LacherII,homologymanifoldbookhegenbarthfriedrichandcavicchilo}.  

\subsection{Homology Manifolds}

First, we start with the definition of an ANR.

\begin{definition} [ANR]
\label{anr}

A space $M$ is an absolute neighborhood retract (ANR) if, whenever $M$ is topologically embedded as a closed subset of some normal space $Z$, $M$ is a retract of some open subset $U$ of $Z$.

\end{definition}

\begin{exmp}\
    \begin{enumerate}
   
        \item Every manifold is an ANR.
        \item Every Alexandrov space is an ANR \cite{perelman1991alexandrov,siebenmann1972deformation}.
        \item Generalizing the previous example, every finite dimensional Weakly Cone-like  Set (WCS set in the sense of Siebenmann \cite{siebenmann1972deformation}) is an ANR.
        \item  Ricci limit spaces need not be ANR's \cite{menguy}.
        \item Spaces with upper curvature bounds are ANR's \cite{pedro,kramer,LytchakNagano2}.
        \item Locally compact, locally geodesically complete, locally Busemann space of non-positive curvature (BNPC spaces) are ANR's \cite{fujiokaandgu}.

    \end{enumerate}
\end{exmp}

\begin{definition} [Homology Manifold]
\label{homology manifold}

A locally compact space $X$ is a \textit{homology $n$-manifold} if for any $x\in X$,
$$
H_{i}(X,X - x) =
\begin{cases} 
\mathbb{Z}, & \text{if } i = n, \\ 
0, & \text{else}.
\end{cases}
$$
\end{definition}

\begin{definition}[Cell-like map]
\label{Cell-like maps}

Let $X$ and $Y$ be Hausdorff spaces. A map $f\colon X\rightarrow Y$ is a \textit{cell-like map} if it is proper, surjective, and if for every $y\in Y$, and for every neighborhood $V$ of $f^{-1}(y)$, there exists a neighborhood $U$ of $f^{-1}(y)$ in $V$, such that the inclusion $i\colon U\hookrightarrow V$ is null-homotopic.

\end{definition}

Cell-like maps, also referred to as \textit{CE maps}, have found profound applications in many important areas of mathematics. For example, cell-like sets (which are intimately related to cell-like maps) were used by Freedman to prove the $4$-dimensional Poincaré conjecture \cite{4-dpoincare}. In \cite{Lytchakstefancelllikemaps} Lytchak and Wenger used cell-like maps to give an elegant and conceptually simpler proof of the celebrated result due to Bonk and Kleiner \cite{BonkKleiner} on the existence of quasisymmetric parametrizations of linearly locally connected, $2$-regular metric $2$-spheres. Further, Lytchak, Nagano, and Stadler \cite{CAT4manifoldsareEuclidean} used cell-like maps to show that topological $4$-manifolds with globally non-positive curvature are homeomorphic to $\mathbb{R}^4$ \cite{CAT4manifoldsareEuclidean}.

\begin{exmp}
\label{compositionanr's}

The composition of cell-like maps between ANR's is a cell-like map \cite{Daverman}. If the condition ``$\mathrm{ANR}$'' is dropped, then the statement is false \cite{Daverman,Sakai}.

\end{exmp}

\begin{remark}
If $X$ has finite covering dimension and $f\colon X\rightarrow Y$ is cell-like, then it is not necessarily true that $Y$ has finite covering dimension as well.   The space $Y$ can be a Dranishnikov space. That is, $Y$ can have finite cohomological dimension but infinite covering dimension (see also \cite{DydakandWalsh} for related work in dimension two and \cite{Robert-Edwards} for an exposition of the theory and history of cell-like maps).

\end{remark}

Siebenmann showed that cell-like maps arise naturally:

\begin{thm}
[\protect{Siebenmann \cite{approximatingcellularmapsbyhomeossiebenmann}}] \label{siebenmann's theorem}
Let $M^n$ and $N^n$ be topological manifolds of dimension $n\geq 5$. If $f\colon M^n\rightarrow N^n$ is a cell-like map, then $f$ can be approximated uniformly by homeomorphisms.
\end{thm}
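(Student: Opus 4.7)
The plan is to invoke the Bing Shrinking Criterion: a proper surjection $f\colon M\to N$ between locally compact metric spaces can be approximated uniformly by homeomorphisms provided that, for every $\varepsilon>0$ and every open cover $\mathcal{U}$ of $N$, there exists a self-homeomorphism $h\colon M\to M$ such that $f$ and $f\circ h$ are $\mathcal{U}$-close while $\mathrm{diam}\bigl(h(f^{-1}(y))\bigr)<\varepsilon$ for every $y\in N$. Thus the entire problem reduces to manufacturing such shrinking homeomorphisms of $M$ over arbitrarily fine covers of $N$.

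First I would reduce the global shrinking problem to a local one by fixing a handle decomposition of $N$ and inducting over the handles. On a single handle $H\subset N$ one works inside the preimage $f^{-1}(H)\subset M$ and constructs an ambient isotopy of $M$, supported over a slightly enlarged handle, which contracts each fibre $f^{-1}(y)$ with $y\in H$ to small diameter. The passage from one handle to the next uses that cell-like maps restrict to cell-like maps over open sets and that very small fibres are preserved up to small error under subsequent isotopies.

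The heart of the argument, and the main obstacle, is the local shrinking step. A single cell-like compactum $K=f^{-1}(y)$ inside $M$ need not be cellular (the intersection of a nested sequence of open $n$-cells); this pathology is genuine in dimensions $\ge 5$. Two ingredients rescue the situation. First, because $N$ is itself a topological $n$-manifold, it satisfies the disjoint disks property: any two maps $D^2\to N$ can be perturbed to have disjoint images. This ``room'' in the target allows one to push candidate engulfing isotopies off obstructions in $N$. Second, one invokes McMillan's cellularity criterion (or a controlled version thereof), together with engulfing in high-dimensional manifolds, to upgrade the cell-likeness of a small neighbourhood of $K$ into genuine cellularity; it is at this step that the hypothesis $n\ge 5$ is indispensable.

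Finally one assembles these pieces. Given $\varepsilon>0$ and an open cover $\mathcal{U}$ of $N$, choose a handle decomposition of $N$ whose handles are sufficiently small relative to $\mathcal{U}$; on each handle, use cellularity of a thickening of each fibre together with general position to produce the required shrinking isotopy; concatenate these via the handle induction to obtain one homeomorphism $h$ meeting the Bing criterion for $(\varepsilon,\mathcal{U})$. Letting $\varepsilon\to 0$ and $\mathcal{U}$ refine, the criterion then produces a sequence of homeomorphisms $M\to N$ converging uniformly to $f$. The genuinely hard part throughout is the engulfing/cellularity machinery underlying the local shrinking, which is what makes the dimensional restriction $n\ge 5$ essential.
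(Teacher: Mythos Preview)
The paper does not prove this statement; it is quoted in the preliminaries as a background result attributed to Siebenmann \cite{approximatingcellularmapsbyhomeossiebenmann}, with no argument supplied. So there is no ``paper's own proof'' to compare against: your proposal is being measured against the literature rather than against anything in this article.

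That said, your outline via the Bing Shrinking Criterion is the correct framework and is essentially the route taken in modern accounts (and in Edwards' generalization stated immediately after in the paper). Two cautions about the sketch as written. First, invoking a handle decomposition of $N$ is not innocuous: existence of topological handle decompositions in dimension $\geq 5$ is itself a deep Kirby--Siebenmann result, and Siebenmann's original argument does not proceed this way. One works instead with coordinate charts and a ``majorant'' shrinking argument, or with Štan'ko-type filtration techniques. Second, the local step is not really ``upgrade each fibre to a cellular set via McMillan and then shrink''; individual cell-like fibres in $M$ need not be cellular, and making one cellular can destroy smallness of neighbouring fibres. The actual content is a simultaneous, controlled shrinking of all fibres over a chart, using the disjoint disks property of the target to perform the requisite general-position and engulfing moves in a way that respects the decomposition as a whole. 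Your high-level plan is right, but the hard analysis lives precisely in that simultaneous control, which your sketch elides.
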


Combining Siebenmann's theorem with the fact that the uniform limit of homeomorphisms between ANR's is a cell-like map \cite{LacherI,LacherII}, one obtains that provided $n\geq 5$, the closure of the homeomorphism group $\mathrm{Homeo}(M^n,N^n)$ of two topologically equivalent $n$-manifolds $M^n$ and $N^n$, is equal to the set of cell-like-maps.  For dimension $n=4$, the analogous result was obtained by Quinn \cite{EndsofmapsIII} and for dimension $n=3$, the result was obtained earlier by Armentrout \cite{Armentrout}.

A few years after Siebenmann obtained his theorem, Edwards \cite{Robert-Edwards}, proved a more general result, relaxing the condition of the target space being a manifold to that of satisfying the disjoint disk property, which is a weak notion of transversality asserting that two maps from the $2$-disk to $X$ can be put in general position:

\begin{thm}
[\protect{Edwards \cite{Robert-Edwards}}]\label{Edward's theorem}
Suppose $f\colon M\rightarrow X$ is a cell-like map from an $n$-manifold without boundary onto an ANR $X$. If $n\geq 5$, then the following assertions are equivalent:

\begin{enumerate}
    \item The map $f$ can be approximated by homeomorphisms.
    \item $X$ has the disjoint disk property (DDP).
\end{enumerate}
\end{thm}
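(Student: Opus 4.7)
The plan is to split into the two implications, using Bing shrinking as the fundamental tool.

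The easy direction $(1) \Rightarrow (2)$ follows almost immediately: if $f$ is the uniform limit of homeomorphisms $h_i \colon M \to X$, then each $h_i$ exhibits $X$ as homeomorphic to the topological $n$-manifold $M$. So $X$ is itself a topological $n$-manifold, and since $n \geq 5$, general position (perturbing maps $D^2 \to X$ locally to be transverse in the manifold charts) yields DDP. No cell-like theory is needed here.

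For the hard direction $(2) \Rightarrow (1)$, I would invoke the \emph{Bing shrinking criterion}: the cell-like map $f \colon M \to X$ can be approximated by homeomorphisms if and only if, for every $\varepsilon > 0$, there is a self-homeomorphism $h \colon M \to M$ such that $d(f \circ h, f) < \varepsilon$ and $\operatorname{diam}(h(f^{-1}(x))) < \varepsilon$ for each $x \in X$. Since $M$ and $X$ are ANRs, the criterion applies, and the problem reduces to constructing such shrinking homeomorphisms. The role of the DDP is to provide the geometric ``room'' for the shrinking: given an $\varepsilon$-cover of $X$ by small open sets $\{U_\alpha\}$, one wants to push point preimages meeting $\partial U_\alpha$ into a single $U_\alpha$. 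The obstruction is that two nearby point preimages might be linked, preventing a small isotopy that shrinks both. DDP rules this out by allowing us to find two disks $D_1, D_2 \subset M$ whose $f$-images are disjoint (after perturbation) and which separate the preimages.

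Concretely, the key step is a controlled engulfing argument: use handle decompositions of $M$, push handles through disjoint ``disk pairs'' obtained from DDP, and build isotopies supported in small sets that shrink clusters of preimages one handle index at a time. This is where $n \geq 5$ becomes essential, since general position and the Whitney trick are needed to resolve intersections of the engulfing disks in the source manifold. Iterating gives a Cauchy sequence of homeomorphisms converging to a homeomorphism $\varphi \colon M \to X$ with $d(\varphi, f) < \varepsilon$.

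The main obstacle, and the reason Edwards' theorem is a major achievement, is carrying out the shrinking \emph{controlled} by $f$: one must produce homeomorphisms of $M$ that shrink the decomposition while remaining $\varepsilon$-close to the identity \emph{as measured in $X$}, not in $M$. This requires a delicate inductive handle-straightening scheme in which each stage uses DDP-produced disk pairs in $X$ pulled back to $M$, combined with torus trick/Kirby calculus style isotopies to absorb the resulting distortion. I would expect to spend most of the proof organizing this induction and verifying the metric estimates, rather than on any single clever idea.
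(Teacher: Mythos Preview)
The paper does not prove this statement at all: Theorem~\ref{Edward's theorem} appears in the preliminaries section as a quoted result, attributed to Edwards with the citation \cite{Robert-Edwards}, and is used later as a black box. There is therefore no ``paper's own proof'' to compare your proposal against.

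As for the content of your sketch: the easy direction $(1)\Rightarrow(2)$ is fine. For $(2)\Rightarrow(1)$ your high-level strategy (reduce to the Bing shrinking criterion, then use DDP to build the required self-homeomorphisms of $M$) is indeed the backbone of Edwards' argument, and you are right that the serious work lies in the controlled shrinking. However, some of the details you gesture at are off. The actual mechanism is not ``handle decompositions of $M$'' together with ``torus trick/Kirby calculus style isotopies''; those belong to Kirby--Siebenmann smoothing/PL-ing theory, not to Edwards' decomposition-space argument. Edwards' proof proceeds instead by first reducing to the $0$-dimensional case (showing the non-degeneracy set can be assumed $0$-dimensional in $X$), then using DDP to produce \emph{disjoint} approximations of the point-preimages by singular disks in $X$, and finally shrinking a null sequence of cell-like sets in $M$ via radial engulfing and Štan'ko-type reimbedding techniques. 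So while your overall plan is on the right track, the execution you outline would not actually go through as written; if you want a genuine proof you should look at Daverman's exposition in \cite{Daverman} rather than improvise the internals.
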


\begin{definition}[Resolution]
\label{def-of-resolution}
An $n$-dimensional homology manifold $X^n$ is \textit{resolvable} if there exist an $n$-dimensional manifold $M^n$ and a cell-like map $f\colon M^n\rightarrow X^n$. The map $f$ is a \textit{resolution} of $X$.
\end{definition}

\begin{exmp}
Resolutions are homotopy equivalences. In other words, every resolvable homology manifold is homotopy equivalent to a manifold \cite{Daverman}. 
\end{exmp}

The next example is important for historical reasons, and illustrates both the motivation behind studying cell-like maps and their relationship to homology manifolds.

\begin{exmp}
If $M^n$ is a manifold, the existence of a cell-like map $f\colon M^n\rightarrow X$ implies that $X$ is an $n$-homology manifold  \cite[p.\ 191]{Daverman}. The converse has had long histories. Initially, Cannon, Bryant and Lacher \cite{CannontBryantLacher} established the converse provided that $n\geq 5$ and that the set of non-manifold points have dimension $k$, where $2k+2\leq n$.  In general, the converse is not true: not every ENR homology manifold is resolvable. This result is a consequence of deep work due to Ferry, Bryant, Mio, and Weinberger \cite{topologyofhomologymanifolds}.
\end{exmp}

 Quinn \cite{obstructiontoresolutionofhomologymanifolds} observed that there exists an obstruction to a homology manifold being resolvable:

\begin{thm}[Quinn \protect{\cite{obstructiontoresolutionofhomologymanifolds}}]
\label{quinn'sobstruction}
Let $X^n$ be a connected ANR homology manifold. Then there exists an $I(X)\in 8\mathbb{Z}+1$ such that the following conditions are met:

\begin{enumerate}
    \item    If $U$ is open in $X$, then $I(U)=I(X)$.
    \item $I(X\times Y)=I(X)I(Y)$.
    \item If $\mathrm{dim}(X)\geq 5$ or if $\mathrm{dim}(X)=4$ and $\partial{X}$ is a manifold, then $I(X)=1$ if and only if $X$ is resolvable.
\end{enumerate}

\end{thm}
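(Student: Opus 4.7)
The plan is to construct $I(X)$ as a local invariant via controlled surgery theory, following the general shape of Quinn's original argument. At each point $x \in X$, small neighborhoods of $x$ carry a controlled surgery obstruction valued in $L_0(\mathbb{Z}) \cong \mathbb{Z}$, obtained by comparing such neighborhoods to Euclidean balls using the ANR hypothesis together with local Poincaré duality of the homology manifold. One then sets $I_x(X) = 8\sigma_x + 1$, where $\sigma_x$ is this local obstruction; the normalization by $8$ reflects the divisibility of the signature of a simply connected Poincaré complex, and the additive shift by $1$ encodes the vanishing obstruction at a genuine manifold point.

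For property (1), the key observation is that $I_x$ is locally constant on $X$: two nearby points $x$ and $x'$ share a common small neighborhood whose controlled obstruction does not depend on the choice of basepoint, so $I_x = I_{x'}$ whenever $x'$ is sufficiently close to $x$. Connectedness then forces $I_x$ to be a single integer $I(X) \in 8\mathbb{Z}+1$, and applying the same argument to any connected open $U \subset X$ yields $I(U)=I(X)$. For property (2), I would invoke the multiplicativity of controlled surgery obstructions and the Künneth formula in $L$-theory: at a product point $(x,y)$, the local invariant $\sigma_{(x,y)}$ of $X \times Y$ decomposes compatibly in $\sigma_x$ and $\sigma_y$, and the affine normalization $\sigma \mapsto 8\sigma+1$ is designed precisely so that $I$ becomes multiplicative.

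Property (3) is where the substance lies. One direction is essentially formal: if $f \colon M^n \to X^n$ is a resolution, then $I(M)=1$, and, since $f$ is cell-like, the local obstructions of $X$ at $f(y)$ agree with those of $M$ at $y$, giving $I(X)=1$. The converse, producing a resolution from $I(X)=1$, requires controlled surgery: one inductively builds a manifold $M^n$ together with a controlled fine homotopy equivalence to $X$ by handle-by-handle surgeries, with the vanishing of $I(X)$ ensuring that every successive controlled surgery obstruction vanishes, and then passes to a limit, invoking Siebenmann's or Edwards's approximation theorem (see Theorem~\ref{Edward's theorem}) to recognize the resulting map as cell-like.

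The main obstacle is this converse implication in (3). It is not an argument one can bootstrap from the preceding steps: it genuinely requires the full machinery of Quinn's controlled surgery, including the controlled $h$-cobordism theorem and controlled Poincaré--Lefschetz duality, and it is precisely here that the dimension restriction $n \geq 5$ (or $n=4$ with manifold boundary, where Freedman's four-dimensional topology substitutes for the controlled $s$-cobordism theorem) enters. In practice I would expect most of the work to go into verifying that the inductive handle-addition procedure closes up correctly, and into pinning down that the obstruction encountered at each stage is indeed a multiple of the globally defined $\sigma_x$.
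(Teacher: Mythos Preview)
The paper does not prove this statement. Theorem~\ref{quinn'sobstruction} appears in Section~\ref{section preliminaries and historical remarks} (Preliminaries), where the authors explicitly say they are recalling known results; the theorem is attributed to Quinn with a citation to \cite{obstructiontoresolutionofhomologymanifolds} and is used later as a black box (e.g.\ in one of the examples in Section~\ref{section examples}). There is therefore nothing in the paper to compare your proposal against.

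As for the proposal itself: as a high-level outline of Quinn's original strategy it is broadly on target --- the invariant is indeed a locally defined controlled surgery obstruction in $L_0(\mathbb{Z})$, local constancy plus connectedness gives (1), and the hard direction of (3) genuinely requires the controlled end/surgery machinery. But the sketch is too schematic to count as a proof. The description of how $\sigma_x$ is extracted from ``comparing small neighborhoods to Euclidean balls'' elides the actual construction (which goes through Quinn's controlled end theorem and a local index rather than a direct comparison map), and the justification of multiplicativity in (2) via ``K\"unneth in $L$-theory'' is not something one can simply invoke at this level of generality --- in Quinn's treatment the product formula is established by a separate argument specific to the local index. If you intend to supply a proof rather than a pointer to the literature, each of these steps needs to be made precise, and in practice that amounts to reproducing substantial portions of \cite{obstructiontoresolutionofhomologymanifolds} and \cite{QuinnEndsofMaps}.
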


To conclude this subsection, we mention an essential property of resolvable homology manifolds, due to Quinn: 

\begin{thm}
[\protect{Quinn \cite{QuinnEndsofMaps,EndsofmapsIII}}]
\label{quin equiv}

Let $X$ be a compact metric space of finite dimension at least $4$. Then the following assertions are equivalent:

\begin{enumerate}
    \item $X$ admits a resolution.
    \item $X\times \mathbb{R}^k$ is a manifold for some $k$.
    \item $X\times \mathbb{R}^2$ is a manifold.
\end{enumerate}
\end{thm}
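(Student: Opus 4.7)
The plan is to establish the cyclic chain $(3)\Rightarrow (2)\Rightarrow (1)\Rightarrow (3)$; the implication $(3)\Rightarrow (2)$ is immediate by taking $k=2$. For $(2)\Rightarrow (1)$, suppose $X\times \mathbb{R}^k$ is a manifold. The inclusion $x\mapsto (x,0)$ exhibits $X$ as a closed retract of $X\times \mathbb{R}^k$, so $X$ is an ANR. A local homology computation via the K\"unneth formula in product neighborhoods yields
\[
H_i\big(X\times \mathbb{R}^k,\, (X\times \mathbb{R}^k)\setminus\{(x,p)\}\big) \;\cong\; H_{i-k}(X,X\setminus\{x\}),
\]
which forces $X$ to be an ANR homology $n$-manifold with empty boundary. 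Quinn's resolvability obstruction $I(X)\in 8\mathbb{Z}+1$ from Theorem~\ref{quinn'sobstruction} is therefore defined, and by its multiplicative property, $I(X)\cdot I(\mathbb{R}^k)=I(X\times \mathbb{R}^k)=1$. Since $I(\mathbb{R}^k)=1$, one concludes $I(X)=1$, and part~(3) of Theorem~\ref{quinn'sobstruction} (applicable because $\dim X\geq 4$ and $\partial X=\emptyset$) then yields that $X$ is resolvable.

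For $(1)\Rightarrow (3)$, let $f\colon M^n\to X^n$ be a resolution. The product map $f\times \mathrm{id}_{\mathbb{R}^2}\colon M\times \mathbb{R}^2\to X\times \mathbb{R}^2$ has point preimages $f^{-1}(x)\times\{p\}$, all of which are cell-like since $f^{-1}(x)$ is; hence $f\times \mathrm{id}_{\mathbb{R}^2}$ is itself cell-like, its source is a manifold of dimension $n+2\geq 6$, and its target is a finite-dimensional ANR. By Edwards' theorem (Theorem~\ref{Edward's theorem}), once we verify that $X\times \mathbb{R}^2$ satisfies the disjoint disk property (DDP), the map $f\times \mathrm{id}_{\mathbb{R}^2}$ is a uniform limit of homeomorphisms, and so $X\times \mathbb{R}^2$ is a manifold.

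The expected main obstacle is therefore the DDP verification for $X\times \mathbb{R}^2$: the extra $\mathbb{R}^2$ factor must be leveraged to perturb pairs of $2$-disks off each other. The strategy is to lift a pair of singular disks in $X\times \mathbb{R}^2$ to near-approximations in $M\times \mathbb{R}^2$ using small cell-like-controlled homotopies, apply general position in the ambient $(n+2)$-dimensional manifold to make the lifts have disjoint images, and then push the result back down under $f\times \mathrm{id}_{\mathbb{R}^2}$; the disjointness produced in the $\mathbb{R}^2$ direction is unaffected by the cell-like projection, since the latter is the identity on the $\mathbb{R}^2$ factor. This transfer is precisely the content of \cite{QuinnEndsofMaps,EndsofmapsIII}; alternatively one may invoke Quinn's direct theorem that every resolvable ANR homology manifold of dimension at least $4$ becomes a topological manifold after stabilization by $\mathbb{R}^2$, which encapsulates the same general-position analysis.
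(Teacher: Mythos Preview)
The paper does not supply a proof of this statement; it is recorded as a theorem of Quinn and used as a black box. So there is no paper proof to compare against, and your proposal stands or falls on its own.

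Your argument for $(2)\Rightarrow(1)$ is clean and correct: the retract observation shows $X$ is an ANR, the K\"unneth computation identifies $X$ as a homology $n$-manifold, and multiplicativity of Quinn's local index together with $I(\mathbb{R}^k)=1$ forces $I(X)=1$, whence resolvability via Theorem~\ref{quinn'sobstruction}. This is exactly how one extracts $(2)\Rightarrow(1)$ from Quinn's obstruction theory.

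For $(1)\Rightarrow(3)$ you correctly reduce, via Edwards' theorem, to verifying the DDP for $X\times\mathbb{R}^2$, but your sketch of that verification has a genuine gap. After approximately lifting the two singular disks to $M\times\mathbb{R}^2$ and applying general position there, the lifts are disjoint in $M\times\mathbb{R}^2$; however, pushing down by $f\times\mathrm{id}_{\mathbb{R}^2}$ can re-identify points. Two distinct points $(m_1,p)$ and $(m_2,p)$ lying in the same fibre $f^{-1}(x)\times\{p\}$ have the same image $(x,p)$, and nothing in ordinary general position in $M\times\mathbb{R}^2$ forces the separation to occur in the $\mathbb{R}^2$-coordinate alone. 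Your sentence ``the disjointness produced in the $\mathbb{R}^2$ direction is unaffected by the cell-like projection'' is therefore unjustified as written: one cannot separate two $2$-disks purely in an $\mathbb{R}^2$ factor, and the separation achieved in the $M$-direction is exactly what $f$ may destroy. A correct argument has to interleave general position in the $\mathbb{R}^2$-factor (to make the coincidence locus of the $\mathbb{R}^2$-projections low-dimensional) with controlled moves over that locus in $X$ or $M$; this is the substantive decomposition-theoretic content, and it is not a one-line transfer. You do acknowledge the difficulty by ultimately invoking Quinn, but since the equivalence being proved \emph{is} Quinn's theorem, that final appeal is circular rather than a proof.
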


\subsection{Uniqueness of Resolutions}

In \cite{LacherI,LacherII}, Lacher investigated the conjecture asserting the uniqueness of resolutions in the sense that if $X$ admits two resolutions, then there exists a homeomorphism between them. Initially, Bryant and Lacher solved the conjecture in the special case when the set of non-manifold points of the homology manifold is a discrete set \cite{BryantandLacher}.  This situation already encompasses a wide range of spaces. For example, Alexandrov spaces with curvature bounded below that are homology manifolds satisfy such a condition \cite{Wu}. This latter result can also be derived using the theory of stratified spaces \cite{henderson} and work due to Fujioka \cite{fujioka2024alexandrov}. Similarly, every metric space with an upper curvature bound that is a homology manifold is a manifold away from a locally finite subset \cite{LytchakNagano2}. Moreover, every locally Busemann space of non-positive curvature that is a homology manifold has discrete singular set \cite{fujiokaandgu}.

Shortly after Bryant and Lacher's result, Quinn  solved the uniqueness of resolutions conjecture in complete generality:

\begin{thm}
[\protect{Quinn \cite{QuinnEndsofMaps,EndsofmapsIII}}] \label{uniquenessofresolutions}
Assume $X^n$ is a compact metric space of dimension $n\geq 4$ and that there exists two resolutions $f_1\colon M_1^n\rightarrow X$ and $f_2\colon M_2^n\rightarrow X$. Then, for every $\epsilon>0$, there exists a homeomorphism $h\colon M_1^n\rightarrow M_2^n$ such that $d(f_2\circ h,f_1)<\epsilon$.
\end{thm}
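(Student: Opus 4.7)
The plan is to reduce the statement to the Chapman--Ferry $\alpha$-approximation theorem, together with Quinn's four-dimensional extension from \cite{EndsofmapsIII}. Given $\epsilon > 0$, the $\alpha$-approximation theorem supplies $\delta = \delta(\epsilon, f_2) > 0$ such that any map $\phi\colon M_1 \to M_2$ which is a $\delta$-homotopy equivalence when measured over $X$ via $f_2\colon M_2 \to X$ lies within $\epsilon/2$ of a genuine homeomorphism $h\colon M_1 \to M_2$ (distances measured after pushing down by $f_2$). Thus it suffices to construct such a $\phi$ satisfying additionally that $d(f_2 \circ \phi, f_1) < \epsilon/2$.

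Next I would build $\phi$ by exploiting a classical fact from Lacher \cite{LacherI, LacherII}: a cell-like surjection between compact ANRs is a \emph{fine homotopy equivalence}. Concretely, for every $\eta > 0$ there exists $\psi_2\colon X \to M_2$ together with homotopies $f_2 \circ \psi_2 \simeq \mathrm{id}_X$ and $\psi_2 \circ f_2 \simeq \mathrm{id}_{M_2}$ whose tracks have diameter less than $\eta$ when projected to $X$ via $f_2$. Setting $\phi := \psi_2 \circ f_1$ gives $f_2 \circ \phi = (f_2 \circ \psi_2) \circ f_1$, which is $\eta$-close to $f_1$. To see that $\phi$ is a $C\eta$-controlled homotopy equivalence measured over $X$, I would pick an analogous fine inverse $\psi_1\colon X \to M_1$ for $f_1$ and verify via a direct diagram chase that $\psi_1 \circ f_2\colon M_2 \to M_1$ serves as a homotopy inverse to $\phi$, with tracks of diameter bounded by a universal multiple of $\eta$ once pushed to $X$.

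Choosing $\eta$ so small that $C\eta < \delta(\epsilon/2)$ and invoking $\alpha$-approximation then produces a homeomorphism $h\colon M_1 \to M_2$ with $f_2 \circ h$ within $\epsilon/2$ of $f_2 \circ \phi$ in $X$. The triangle inequality yields $d(f_2 \circ h, f_1) \leq d(f_2 \circ h, f_2 \circ \phi) + d(f_2 \circ \phi, f_1) < \epsilon$, as desired.

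The principal obstacle is the case $n = 4$: here the $\alpha$-approximation theorem is not due to Chapman--Ferry but rather to Quinn \cite{EndsofmapsIII}, and its proof relies on controlled surgery in dimension four together with a delicate torus-trick argument that is substantially more technical than the high-dimensional analogue. A secondary but important bookkeeping point in the case $n \geq 5$ is ensuring the control constants remain coherent when upgrading the hereditary fine-equivalence property of $f_1$ and $f_2$ to an honest $X$-controlled homotopy equivalence $\phi$ of the sort demanded by $\alpha$-approximation; this is routine controlled topology, but it requires careful tracking of the tracks through each composition.
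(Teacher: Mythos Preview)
The paper does not supply its own proof of this statement; it is quoted as a theorem of Quinn, and the paper records immediately afterward that Quinn's argument rests on his thin $h$-cobordism theorem. So the comparison is between your approach and Quinn's.

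There is a genuine gap in your outline. The $\alpha$-approximation theorem you invoke in the first paragraph, with $\delta=\delta(\epsilon,f_2)$ and all distances ``measured after pushing down by $f_2$'', is \emph{not} the Chapman--Ferry theorem as stated in the paper (Theorem~\ref{alphaapproximationtheorem}). In that theorem the control is measured in the metric of the target manifold $M_2$ itself, not via a reference map to an auxiliary space $X$. What your argument actually requires is a \emph{controlled over $X$} version of $\alpha$-approximation, and this is a substantially deeper statement than the unparameterized one.

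The obstruction to transferring control is concrete. Lacher's results tell you that $f_2$ is an $\eta$-equivalence for every $\eta>0$, but by Definition~\ref{controlledequivalence} both homotopies have their tracks measured in $X$. In particular, the homotopy $\psi_2\circ f_2\simeq \mathrm{id}_{M_2}$ has tracks whose \emph{images under $f_2$} are small in $X$; the tracks themselves, sitting in $M_2$, need not be small at all, because $f_2$ is only cell-like and point-preimages may have arbitrarily large diameter in $M_2$. Consequently you cannot deduce that $\phi=\psi_2\circ f_1$ is a $\delta'$-equivalence in the $M_2$-metric, which is exactly the hypothesis the unparameterized $\alpha$-approximation theorem demands. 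The controlled-over-$X$ version you would need---for $X$ a compact ANR and the control map a CE map---is precisely the sort of statement one proves \emph{using} Quinn's thin $h$-cobordism theorem (compare the stabilize-by-$\mathbb{T}^2$ and destabilize argument in the paper's second proof of Theorem~\ref{main thm}). So the step you label ``routine controlled topology'' is in fact the whole content of Quinn's proof; the outline is not misguided, but as written it is circular.
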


\begin{exmp}
If $X$ is infinite dimensional, then uniqueness of resolutions need not hold \cite{DFW}. However, Ferry \cite[Theorem 3]{ferryfiniteness} showed that for any space $X$, there are at most finitely many non-homeomorphic closed $n$-manifolds, $n\neq 3$, that admit CE maps over $X$.  The authors show in \cite{DFW} that there exists an infinite dimensional compactum $X$ and two non-homeomorphic smooth $n$ manifolds $M$ and $N$ that admit CE maps over $X$. This reflects that the infinite dimensional situation is rather different than the finite dimensional setting. For completeness we mention that recently, Daher \cite{Daher} refined the construction above by showing that for any natural numbers $m$ and $n\geq 6$, there are $m$-closed non-homeomorphic simply connected $n$-manifolds which are CE-related.

\end{exmp}

Quinn's result relies heavily on his ``thin $h$-cobordism theorem''. Before stating this theorem, we define the notion of \textit{$(\delta,1)$-connectedness}.

\begin{definition}[($\delta,1)$-connected maps]
Let $X$ be a metric space and let $\delta>0$. A map $e\colon W\rightarrow X$ is \textit{$(\delta,1)$-connected} if, given a relative $2$-complex $(R,S)$, where $i\colon S \hookrightarrow R$ is the inclusion, and a commutative diagram

\[
\begin{tikzcd}
S \arrow[r, "s"] \arrow[d, "i"] & M \arrow[d, "e"] \\
R \arrow[r, "r"']               & X               
\end{tikzcd},
\]
there exists a map $E\colon R\rightarrow M$ such that $E\circ i=s$ and $d(e\circ E,r)<\delta$.

\end{definition}

Roughly speaking, $(\delta,1)$-connectedness ensures that the local fundamental groups are not too wild.

\begin{exmp}
\

\begin{enumerate}
    \item One can easily verify that a homeomorphism is a $(\delta,1)$-connected map for all $\delta>0$. 
    \item More generally, a CE-map between compact ANR's is $(\delta,1)$-connected for all $\delta>0$ \cite{LacherI,LacherII}.

\end{enumerate}

\end{exmp}

 We now state the thin $h$-cobordism theorem (cf. \cite[p. 505]{EndsofmapsIII}).

\begin{thm}
[Thin $h$-cobordism theorem; Quinn \cite{QuinnEndsofMaps,EndsofmapsIII}] Suppose $X$ is a compact metric space with trivial local  fundamental groups. Given $\epsilon>0$, there exists a $\delta>0$ such that if $e\colon W\rightarrow X$ is  $(\delta,1)$-connected and is a proper $(\delta,h)$-cobordism of dimension $n+1\geq 5$, then $W$ has an $\epsilon$-product structure over $X$.

\end{thm}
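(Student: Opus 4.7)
The plan is to prove this controlled $s$-cobordism statement by combining a controlled handle cancellation scheme with a contradiction/limit argument that extracts the uniform $\delta$ from the classical $s$-cobordism theorem. Throughout, a $(\delta,h)$-cobordism means one whose strong deformation retractions onto either boundary component have tracks whose images in $X$ have diameter at most $\delta$; an $\epsilon$-product structure is a homeomorphism $W \cong \partial_{-}W \times [0,1]$ whose composition with $e$ is $\epsilon$-close to the projection composed with $e|_{\partial_{-}W}$.

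First I would fix a fine handle decomposition of $W$ relative to $\partial_{-}W$ (using a Morse function or triangulation) and arrange, after subdivision, that the image in $X$ of each handle has diameter comparable to $\delta$. The two main geometric moves are then (a) \emph{controlled handle trading}, where low-index handles are replaced by high-index ones after tubing along paths; this uses the $(\delta,1)$-connectedness to lift, with small error, the required $1$-dimensional homotopies from $X$ into $W$; and (b) \emph{controlled Whitney cancellation}, where a $k$-handle and a $(k+1)$-handle meeting in a single algebraic intersection point (up to small controlled error in $X$) are cancelled via a Whitney $2$-disk whose image has diameter $\leq C\delta$. The hypothesis that $X$ has trivial local fundamental groups is used here precisely to eliminate the loop in the Whitney disk: any $\pi_1$-class that might obstruct framing the Whitney disk can be killed locally at the cost of increasing the $X$-diameter by a controlled factor. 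Iterating through handle indices $0,1,\dots,n+1$ produces a cancellation of all handles and hence an $\epsilon'$-product structure, with $\epsilon'$ a universal multiple of the initial $\delta$.

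The hard part is ensuring that all of these cancellations can be carried out simultaneously without the controlled errors compounding uncontrollably. This is managed by a \emph{squeezing} argument: one first proves a weak form of the theorem producing a product structure with control $C(n)\delta$, and then plugs the weak version back into itself via a telescoping of finer and finer subdivisions, so the error goes to zero geometrically and one attains any prescribed $\epsilon$. The middle-dimensional cancellations are the delicate case, since there the controlled Whitehead torsion appears; however, over a space with trivial local $\pi_1$ the relevant controlled Whitehead group vanishes (this is where one invokes the standard computation of $\mathrm{Wh}(e_X)$ for such $X$), so the algebraic obstruction to geometric cancellation is zero and the Whitney moves can indeed be performed.

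Finally, to obtain the uniform dependence of $\delta$ on $\epsilon$ I would argue by contradiction. Suppose not: then for some $\epsilon_0>0$ there exist $(\delta_i,1)$-connected proper $(\delta_i,h)$-cobordisms $e_i\colon W_i \to X$ with $\delta_i \to 0$ admitting no $\epsilon_0$-product structure. Since $X$ is fixed and compact and the control $\delta_i$ forces the $e_i$ to become more and more rigid near $X$, a suitable compactness argument (together with Ferry's finiteness for CE-related manifolds mentioned earlier) allows one to pass to a subsequential limit yielding an honest proper $h$-cobordism $e_\infty\colon W_\infty \to X$ with trivial Whitehead torsion; by the classical $s$-cobordism theorem $W_\infty$ is a product, and a diagonal approximation argument then transfers this product structure back to $W_i$ for large $i$ with $X$-error less than $\epsilon_0$, a contradiction. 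The principal obstacle is item (b) above: making the controlled Whitney trick work so that the $2$-disks used to cancel middle-dimensional handles have uniformly small image in $X$, which is exactly where the trivial local $\pi_1$ hypothesis on $X$ is indispensable.
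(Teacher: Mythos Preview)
The paper does not prove this statement; it is quoted from Quinn \cite{QuinnEndsofMaps,EndsofmapsIII} as a black-box tool and invoked in the second proof of Theorem~\ref{main thm}. There is therefore no argument in the paper to compare against.

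Your first three paragraphs are a reasonable high-level outline of the controlled handle-cancellation scheme that Quinn actually carries out in \emph{Ends of Maps I, III}: trade low-index handles using $(\delta,1)$-connectedness, cancel middle-dimensional pairs via controlled Whitney disks (this is where trivial local $\pi_1$ enters), and squeeze to sharpen the constant. The real proof is far more technical than the sketch, but the architecture is right.

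The final paragraph, however, has a genuine gap. You propose to obtain the uniform $\delta$ by contradiction: take $(\delta_i,h)$-cobordisms $e_i\colon W_i\to X$ with $\delta_i\to 0$ admitting no $\epsilon_0$-product structure, and pass to a ``subsequential limit'' $W_\infty\to X$. But there is no compactness available for the $W_i$. The base $X$ is fixed, but the $(n{+}1)$-manifolds $W_i$ carry no metric or geometric bounds that would give Gromov-type precompactness, and nothing forces them to lie in finitely many homeomorphism types. Ferry's finiteness theorem that you invoke concerns \emph{closed} $n$-manifolds admitting CE maps onto a fixed target; it says nothing about a sequence of $(\delta_i,h)$-cobordisms over $X$, and even finiteness of homeomorphism types would not by itself produce a limiting map $e_\infty$. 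Moreover, even granting a limit, ``transferring the product structure back to $W_i$ with error $<\epsilon_0$'' is itself a controlled statement of exactly the strength you are trying to prove. In Quinn's argument the uniform $\delta(\epsilon)$ comes directly out of the squeezing/iteration mechanism you describe in your third paragraph, not from any limiting procedure; the contradiction step is both unnecessary and, as written, unsupported.
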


The term \textit{trivial local fundamental group} is meant in the sense of Freedman and Quinn \cite[p. 89]{freedmanandquinn}.

Quinn's thin $h$-cobordism theorem has found deep applications. For a discussion of some of these, such as a proof of Freedman's $4$-dimensional Poincaré conjecture, we refer the reader to \cite{discembeddingtheorem}. Chapman generalized the thin $h$-cobordism theorem significantly in \cite{chapman}.

\subsection{$\alpha$-Approximation Theorem}

A fundamental result in controlled topology is the \textit{$\alpha$-approximation theorem} due to Chapman and Ferry, which has been useful in  comparison geometry \cite{GPW}. Before stating this theorem, we need a couple of definitions.

\begin{definition}[$\epsilon$-map]
Fix $\epsilon>0$. A map $ f\colon X\rightarrow Y$ between metric spaces is an \textit{$\epsilon$-map} if the diameter of each $f^{-1}(y)$ is at most $\epsilon$.
\end{definition}

\begin{definition} [$\epsilon$-equivalence] \label{controlledequivalence}
Fix $\epsilon>0$. A map $f\colon X\rightarrow Y$ is an $\epsilon$-\textit{equivalence} if there exists a map $g\colon Y\rightarrow X$ such that there exists a homotopy $H_{Y}\colon Y\times I\rightarrow Y$ between $f\circ g$ and $\mathrm{id}_{Y}$ such that $\mathrm{diam}(H_{Y}(y,I))\leq \epsilon$ for all $y\in Y$ and, there exists a homotopy $H_{X}\colon X\times I\rightarrow X$ such that $\mathrm{diam}(fH_{X}(x,I))\leq \epsilon$  for all $x\in X$.

\end{definition}

In other words, an $\epsilon$-equivalence $f$ is a map which has a homotopy inverse and for which the homotopies that witness the equivalence have small tracks in the target space of $f$.

\begin{exmp}
If $X$ and $Y$ are ANR's, then every cell-like map $f\colon X\rightarrow Y$ is an $\epsilon$-homotopy equivalence for all $\epsilon>0$ \cite{LacherI,LacherII}.

\end{exmp}

We will use the following version of Chapman and Ferry's $\alpha$-approximation theorem:

\begin{thm}
[$(\epsilon,\delta)$-approximation theorem; Chapman and Ferry, \cite{ChapmanFerry}] \label{alphaapproximationtheorem}
Let $(M^n,d)$ be a closed metric $n$-manifold with $n\geq 5$. Then, for every $\epsilon>0$, there exists a $\delta>0$ such that if $f\colon M^n\rightarrow N^n$ is a $\delta$-equivalence, then $f$ is $\epsilon$-close to a homeomorphism $h\colon M^n\rightarrow N^n$.
\end{thm}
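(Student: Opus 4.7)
The plan is to reduce the statement to Quinn's thin $h$-cobordism theorem, which was recorded earlier in the excerpt. Given a $\delta$-equivalence $f\colon M^n\rightarrow N^n$, I would form the mapping cylinder $W$ of $f$, viewed as a cobordism from $M^n$ to $N^n$, together with the canonical collapse $p\colon W\rightarrow N$. The homotopy inverse $g$ and the small-tracks homotopies $H_X$, $H_Y$ provided by Definition~\ref{controlledequivalence} should realize $W$ as a proper $h$-cobordism whose control, measured via $p$, is of order $\delta$. Concretely, the deformation retraction of $W$ onto $N$ is built from $H_Y$ (with tracks of diameter $\leq \delta$ in $N$), while the deformation retraction onto $M$ is built from $H_X$ (with $f$-image of tracks of diameter $\leq \delta$ in $N$), so both strong deformation retractions are $\delta$-small over $N$.

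Next, one verifies the $(\delta',1)$-connectedness hypothesis required by the thin $h$-cobordism theorem. Given a relative $2$-complex $(R,S)$ and a diagram mapping $S\to W$ and $R\to N$, the relevant lift is constructed skeleton-by-skeleton: $0$-simplices are lifted through the homotopy equivalence data directly, and $1$- and $2$-cells are filled using contractions provided by applying $H_X$ and $H_Y$ within the small-tracks homotopies. The resulting quantitative loss is comparable to $\delta$. With these two properties in hand, Quinn's thin $h$-cobordism theorem (applied in dimension $n+1\geq 6$) supplies, for the given $\epsilon$, a threshold $\delta_0$ such that any $(\delta_0,1)$-connected proper $(\delta_0,h)$-cobordism over $N$ admits an $\epsilon$-product structure. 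Choosing $\delta$ small enough that the associated control parameter falls below $\delta_0$, one extracts a homeomorphism $W\cong M\times I$ whose level-$1$ slice yields a homeomorphism $h\colon M^n\rightarrow N^n$ with $\sup_{x\in M} d_N(h(x),f(x))<\epsilon$.

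The main obstacle is twofold. First, the mapping cylinder of $f$ is not a priori a topological manifold near the image of the non-injective locus of $f$, so before invoking the $h$-cobordism machinery one must replace $W$ by a genuine manifold cobordism, typically by appealing to Quinn's manifold approximate fibration technology or by an explicit thickening via $M\times[0,1]\cup_{f} N$ followed by engulfing; this replacement must be done without destroying the $\delta$-control. Second, and this is really the technical heart of the Chapman--Ferry argument, one must perform careful quantitative bookkeeping to translate the three pieces of data packaged into the definition of a $\delta$-equivalence into the precise form of $(\delta_0,1)$-connectedness and $(\delta_0,h)$-cobordism structure required above.

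A subsidiary point is the handling of local fundamental groups of $N$: Quinn's theorem, as stated in the excerpt, assumes trivial local fundamental groups in the sense of Freedman--Quinn. In the simply connected case this is automatic, but in general one must either pass to universal covers of small neighborhoods or, as in Chapman and Ferry's original argument, work in the setting where the $\delta$-equivalence hypothesis forces the $\pi_1$-data to be visible on small scales and therefore controllable. If this route proves too heavy, an alternative is to bypass the cylinder construction and argue directly via controlled surgery, straightening the map $f$ handle-by-handle, at the cost of replacing the single application of the thin $h$-cobordism theorem with a more involved iteration.
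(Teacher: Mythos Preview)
The paper does not prove this statement. Theorem~\ref{alphaapproximationtheorem} is recorded in the preliminaries section as a background result, attributed to Chapman and Ferry with a citation, and no argument is supplied; the paper simply uses it as a black box (for instance, in the second proof of Theorem~\ref{main thm}). There is therefore no proof in the paper against which to compare your proposal.

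As a side remark on your sketch itself: the mapping-cylinder obstacle you flag is not a minor technicality but the central difficulty, and your suggested workarounds are too vague to count as a plan. The original Chapman--Ferry argument does not proceed via the thin $h$-cobordism theorem at all; it uses handle-theoretic engulfing and torus tricks directly on the map $f$. The paper does note, just after the theorem, that Ferry and Weinberger later gave an alternative proof (in the course of extending the result to dimension~$4$) that \emph{does} pass through Quinn's thin $h$-cobordism theorem, so your instinct is not unreasonable---but even that route does not use the mapping cylinder of $f$ as the cobordism, and your proposal would need a genuine construction of a manifold cobordism with the required control before it could be completed.
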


Chapman and Ferry proved a non-compact, more general version of the statement above. However, we will only concern ourselves with the compact case.

Au \cite{approximatinghomotopyequivalencesbyhomeosdim4}, and Ferry and Weinberger \cite{ferryweinberger}, independently, extended the $\alpha$-approximation theorem to dimension $4$ using different methods. Au overcame the hurdles in the original proof to extend the Chapman and Ferry's arguments to dimension $4$ while Ferry and Weinberger used Quinn's thin $h$-cobordism theorem. We note, however, that both used Quinn's $4$-dimensional thin $h$-cobordism theorem. The three dimensional $\alpha$-approximation theorem is also true, and is due to Jakobsche \cite{Jakobsche} (modulo the Poincaré Conjecture, which has now been resolved by Perelman).

\subsection{Gromov--Hausdorff Convergence and Contractibility Functions}
\label{GH-convergence and contractibility functions}
In this section, we recall basic material  concerning Gromov--Hausdorff convergence and contractibility functions. We also collect fundamental results we will rely on in the proofs of our main theorems. 
   
\begin{definition}[$\epsilon$-commutative diagram]
\label{almost-commutativediagram}

Let $X,Y$ be arbitary spaces and let $Z$ be a metric space. Fix $\epsilon> 0$. Consider a diagram of maps:

\begingroup
\centering

\begin{tikzcd}
X \arrow[rr, "f"] \arrow[rd] &   & Y \arrow[ld,] \\
                                  & Z &                   
\end{tikzcd}

\endgroup

\noindent The diagram is said to \textit{almost commute with an error} $\epsilon$ if for any $x\in X$, the distance in $Z$, between the images of $x$ under the two maps from the diagram, is at most $\epsilon$. Such a diagram is also called an $\epsilon$-\textit{commutative diagram}.

\end{definition}

Trivially, commutative diagrams are $\epsilon$-commutative for all $\epsilon>0$.

\label{GH-convergenceandcontractibilityfunctions}

\begin{definition}[Almost isometries]
\label{GH-convergence}
Let $(X,d_{X})$ and $(Y,d_{Y})$ be compact metric spaces. Fix $\epsilon>0$. An $\epsilon$-\textit{isometry} $f\colon X\rightarrow Y$ is a (not necessarily continuous) map satisfying the following properties:

\begin{enumerate}
\item $(\epsilon$-surjective): For every $y\in Y$, there exists an $x\in X$ such that $d_{Y}(f(x),y)\leq \epsilon$.
\item $(\epsilon$-distance preserving): For $x_1,x_2\in X$, $|d_{Y}(f(x_1),f(x_2))-d_{X}(x_1,x_2)|\leq \epsilon$.

\end{enumerate}

 The value $\epsilon>0$ is called an \textit{error } of $f$.
\end{definition}

\begin{exmp}
Assume $f\colon X\rightarrow Y$ is an $\epsilon$-isometry. Then, $f$ is an $\epsilon$-map.

\end{exmp}

\begin{definition}[Gromov--Hausdorff Convergence]
\label{GHconv}
Let $\{X_i\}_{i\in \mathbb{N}}$ be a sequence of compact metric spaces and let $X$ be a compact metric space. We say that $X_i$ \textit{Gromov--Hausdorff converges to $X$}, denoted $X_i\rightarrow_{GH}X$, if there exists a sequence of positive real numbers $\{\epsilon_i\}_{i\in \mathbb{N}}$ such that $\lim_{i\rightarrow \infty}\epsilon_i=0$ and a sequence of $\epsilon_i$-isometries $f_i\colon X_i\rightarrow X$.
\end{definition}

We will now define one of the central notions underlying our results.

\begin{definition}[Contractibility function]
\label{contractibility function}

A function $\rho\colon [0,R]\rightarrow [0,\infty)$ is a \textit{contractibility function} if $\rho(t)\geq t$ for all $t$, $\rho(0)=0$ and $\rho$ is continuous at $0$. The function $\rho$ is a contractibility function for a metric space $X$ if for every $t\leq R$ and $x\in X$, the ball $B_t(x)$ contracts in the ball $B_{\rho(t)}(x)$. That is, the inclusion $i\colon B_t(x)\hookrightarrow B_{\rho(t)}(x)$ is null-homotopic.

\end{definition}

We will also rely on the result:

\begin{thm}[Petersen \cite{petersenfinitenesstheoremformetricspaces}] \label{petersen} Let $\mathcal{C}$ be a family of compact $n$-dimensional metric spaces admitting a uniform local contractibility function $\rho$. For any $\epsilon>0$, there exists a $\delta>0$ such that if $X,Y\in \mathcal{C}$ and $d_{GH}(X,Y)\leq \delta$ then $X$ and $Y$ are $\epsilon$-homotopy equivalent.
\end{thm}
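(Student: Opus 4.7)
The plan is to promote $\delta$-isometries $f\colon X\to Y$ and $g\colon Y\to X$, provided by the assumption $d_{GH}(X,Y)\le\delta$, to continuous maps $F\colon X\to Y$ and $G\colon Y\to X$ that are $\epsilon$-homotopy inverses of one another. The passage from $f,g$ to $F,G$ is a skeleton-by-skeleton extension over a simplicial nerve, with the contractibility function $\rho$ governing the error at each cellular step. The hypothesis $\dim X=\dim Y\le n$ caps the number of extension steps, which is what prevents the iterated compositions of $\rho$ from blowing up.

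First I would fix a small $\eta>0$ and choose an open cover $\mathcal U$ of $X$ by balls of radius $\eta$; because $\dim X\le n$, it may be refined to have multiplicity at most $n+1$, so the nerve $N(\mathcal U)$ is a simplicial complex of dimension at most $n$, and a subordinate partition of unity yields the canonical map $\pi_X\colon X\to |N(\mathcal U)|$ whose fibers have diameter $O(\eta)$. Do the same for $Y$. Then define $\widetilde F$ on the vertex set of $N(\mathcal U)$ by $\widetilde F(v_\alpha):=f(x_\alpha)$ for chosen basepoints $x_\alpha\in U_\alpha$, and extend skeleton by skeleton: an edge $[v_\alpha,v_\beta]$ has image in a set of diameter $\le 2\eta+\delta$ in $Y$, so $\rho$ fills it by a path contained in a set of diameter $\le\rho(2\eta+\delta)$; inductively, a $k$-simplex whose boundary image has diameter $r_{k-1}$ extends with image of diameter at most $r_k:=\rho(r_{k-1})$. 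Because $\dim N(\mathcal U)\le n$, this terminates after at most $n$ rounds and yields a continuous $\widetilde F\colon |N(\mathcal U)|\to Y$; set $F:=\widetilde F\circ\pi_X$, and construct $G$ analogously. By design, $F$ is pointwise close to $f$ and $G$ to $g$, with the closeness tending to $0$ as $\eta,\delta\to 0$.

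For the homotopy equivalence, since $f$ and $g$ are $\delta$-isometries, $g\circ f$ is $O(\delta)$-close to $\mathrm{id}_X$, and hence so is $G\circ F$ up to an error controlled by $\rho^{(n)}$. To upgrade this closeness to an actual homotopy with small tracks, I would run the same construction on the prism $|N(\mathcal U)|\times I$: at each vertex use $\rho$ to join $(G\circ F)(x_\alpha)$ to $x_\alpha$ by a short path, and extend over $\sigma\times I$ for $k$-simplices $\sigma$ by one further $\rho$-filling at each dimension. This yields a homotopy $H\colon X\times I\to X$ from $G\circ F$ to $\mathrm{id}_X$ satisfying $\diam H(\{x\}\times I)\le\rho^{(n+1)}\bigl(C(\eta+\delta)\bigr)$, and similarly a homotopy from $F\circ G$ to $\mathrm{id}_Y$. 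Since $\rho^{(n+1)}$ is continuous at $0$ with value $0$ there, choosing $\eta$ and then $\delta\le\eta$ small enough forces all tracks below $\epsilon$.

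The main obstacle is the quantitative bookkeeping through the iterated $\rho$: every cellular extension composes $\rho$ once more, so the total number of iterations must be bounded in terms of $n$ alone. This is exactly what the refinement of $\mathcal U$ to multiplicity at most $n+1$ provides, which is the precise content of covering dimension being $n$; once this is in place, continuity of $\rho$ at $0$ converts the iterated bound into the smallness needed for $F$ and $G$ to witness an $\epsilon$-homotopy equivalence.
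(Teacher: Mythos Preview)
The paper does not prove this statement; it is quoted without proof as a result of Petersen and used as input in the second proof of Theorem~\ref{main thm}. There is therefore no argument in the paper to compare your sketch against.

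For what it is worth, your outline is the standard Petersen strategy: pass to nerves of fine covers of multiplicity at most $n+1$, extend skeleton by skeleton using the $\mathrm{LGC}(n,\rho)$ condition, and control the total error by an iterate $\rho^{(n+1)}$. One step is handled too quickly, however. When you ``run the same construction on the prism $|N(\mathcal U)|\times I$'' and then precompose with $\pi_X\times\mathrm{id}$, the time-$1$ map you obtain is not $\mathrm{id}_X$ but rather $\sigma_X\circ\pi_X$, where $\sigma_X\colon |N(\mathcal U)|\to X$ is the skeletal extension of $v_\alpha\mapsto x_\alpha$; likewise the time-$0$ map is a new skeletal extension close to, but not literally equal to, $G\circ F$. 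You therefore still owe two further small homotopies: one correcting the time-$0$ end (another nerve argument of the same type), and the genuinely nontrivial one $\sigma_X\circ\pi_X\simeq\mathrm{id}_X$. The latter is exactly the statement that $X$ is $\epsilon$-dominated by its nerve, and it is here that the hypothesis $\dim X\le n$ (not merely $\dim|N(\mathcal U)|\le n$) is actually used. This is all standard and fixable, but as written your prism step does not produce a homotopy with the claimed endpoints.
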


We now define a central class of spaces we will deal with. 

\begin{definition}[Locally geometrically contractible]
\label{LGC}
Given a metric space $X$, and a contractibility function $\rho$ on $X$, we say that $X$ is $\mathrm{LGC}(n,\rho)$ if the following condition holds:
\smallskip

\begin{itemize}
    \item[(LGC)] For each map $\alpha\colon \mathbb{S}^k \rightarrow X$ with $k\leq n$ and $\mathrm{diam}(\alpha(\mathbb{S}^k))<R$, there exists a map 
    \[
    \overline{\alpha} \colon D^{k+1} \rightarrow X
    \]
    extending $\alpha$ and such that the image has diameter smaller than $\rho(\mathrm{diam}(\alpha(\mathbb{S}^k))$.
\end{itemize}
\smallskip

We say that $X$ is $\mathrm{LGC}(\rho)$ if $X$ is in $\mathrm{LGC}(n,\rho)$ for some $n$ and $\rho$.
\end{definition}

\begin{exmp}[\cite{Moore}]
If a space $X$ is in $\mathrm{LGC}(n,\rho)$ for some $n$ and $\rho$, then it need not be the case that $X$ has finite covering dimension \cite{Moore}. The space $X$ can be a Dranishnikov space \cite{DranishnikovI,DranishnikovII}.

\end{exmp}


The following result shows that the class $\mathrm{LGC}(\rho)$ is large (see \cite{Borsuk,Moore}).

\begin{prop}
\label{anrcharac}

Let $X$ be a compact $n$-dimensional metric space, then $X$ is an ANR if and only if  $X\in \mathrm{LGC}(n,\rho)$ for some contractibility function $\rho$.

\end{prop}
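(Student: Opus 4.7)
The equivalence is the classical Borsuk--Kuratowski characterization of finite-dimensional ANRs together with a uniformization of local contractibility. My plan is to handle the two implications separately, invoking Borsuk's $\mathrm{LC}^n$ theorem for one direction and a standard neighborhood-retract / modulus-of-continuity argument for the other.

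\emph{Reverse direction} ($X \in \mathrm{LGC}(n,\rho) \Rightarrow X$ is an ANR). I would first rephrase the LGC condition as uniform local $k$-connectedness for $k \leq n$. Given $x \in X$ and any neighborhood $V$ of $x$, pick $\epsilon > 0$ small enough that $B_{\rho(2\epsilon)}(x) \subset V$ and $2\epsilon < R$. Then any continuous map $\alpha \colon \mathbb{S}^k \to B_{\epsilon}(x)$ has image of diameter at most $2\epsilon$, so by LGC it extends to $\overline{\alpha}\colon D^{k+1} \to X$ with image of diameter at most $\rho(2\epsilon)$, hence contained in $B_{\rho(2\epsilon)}(x) \subset V$. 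This gives $X \in \mathrm{LC}^n$ in the sense of Borsuk. Since $X$ is compact metric of covering dimension $n$, the Borsuk--Kuratowski theorem (a compact metric space of dimension $\leq n$ is an ANR iff it is $\mathrm{LC}^n$) yields that $X$ is an ANR.

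\emph{Forward direction} ($X$ ANR $\Rightarrow X \in \mathrm{LGC}(n,\rho)$). Embed $X$ isometrically as a closed subset of a Banach space $E$ (for instance via the Kuratowski embedding into $C(X)$). Since $X$ is a compact ANR, there exist an open neighborhood $U \supset X$ in $E$ and a retraction $r \colon U \to X$. By compactness of $X$, choose $\delta_0 > 0$ so that the closed $\delta_0$-neighborhood $K \subset U$ of $X$ has compact image under $r$, and so that $r$ admits a modulus of continuity $\omega \colon [0,\delta_0] \to [0,\infty)$, i.e.\ $\mathrm{diam}(r(A)) \leq \omega(\mathrm{diam}(A))$ for any $A \subset K$. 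Given $\alpha \colon \mathbb{S}^k \to X$ with $\mathrm{diam}(\alpha(\mathbb{S}^k)) = t \leq \delta_0/2$, fix $p \in \alpha(\mathbb{S}^k)$ and define $\widetilde{\alpha}\colon D^{k+1} \to E$ by the radial cone $\widetilde{\alpha}(sv) = (1-s)p + s\alpha(v)$ for $v \in \mathbb{S}^k$, $s \in [0,1]$. Its image lies in the closed ball of radius $t$ about $p$ in $E$, hence in $K \subset U$. Set $\overline{\alpha} := r \circ \widetilde{\alpha}\colon D^{k+1} \to X$. Then $\mathrm{diam}(\overline{\alpha}(D^{k+1})) \leq \omega(2t)$, so putting $\rho(t) := \max\{t, \omega(2t)\}$ (and extending continuously at $0$) gives the required contractibility function. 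Dimension plays no role here; in particular we obtain the condition for all $k$, hence in particular for $k \leq n$.

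\emph{Main obstacle.} The backward direction is essentially a citation of Borsuk's theorem, so the genuine work lies in the forward direction, where the challenge is extracting a \emph{single} contractibility function $\rho$ governing all small spheres simultaneously. Pointwise local contractibility of the ANR is not enough; the crucial input is the uniform continuity of the retraction $r$ on a closed neighborhood of $X$ with compact-enough image, which is what converts pointwise data into a uniform function $\rho$. Once this is set up, the convex (affine) extension in the ambient Banach space and the composition with $r$ is routine.
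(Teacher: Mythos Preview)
The paper does not give its own proof of this proposition: it is stated as a known result with references to Borsuk and Moore. So there is nothing to compare against, and your task is simply to supply a correct argument along classical lines, which you essentially do.

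Your reverse implication is clean: $\mathrm{LGC}(n,\rho)$ visibly implies $\mathrm{LC}^n$, and the Borsuk--Kuratowski characterization (a compact metric space of dimension $\leq n$ is an ANR iff it is $\mathrm{LC}^n$) finishes it.

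Your forward implication is also correct in spirit, but one sentence needs care. You assert that $r$ admits a modulus of continuity on the closed $\delta_0$-neighborhood $K$ of $X$ in the Banach space $E$. Since $E$ is infinite-dimensional, $K$ is not compact, so continuity of $r$ on $U$ does not immediately give uniform continuity on $K$. What you actually use, and what does follow from compactness of $X$, is the weaker statement that $r$ is \emph{equicontinuous at the points of $X$}: for every $\varepsilon>0$ there is $\delta>0$ such that $d(r(y),p)<\varepsilon$ whenever $p\in X$ and $d(y,p)<\delta$. This is enough, because your coned extension $\widetilde{\alpha}(D^{k+1})$ lies in $B_E(p,t)$ for some $p\in X$, so every point of it is $t$-close to $p$ and hence its image under $r$ has diameter at most $2\varepsilon$. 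Either rephrase the modulus-of-continuity claim this way, or alternatively embed $X$ (topologically, via N\"obeling--Menger) into $\mathbb{R}^{2n+1}$ so that $K$ is genuinely compact, and then compensate for the non-isometric embedding with the moduli of continuity of the embedding and its inverse. With that adjustment the argument is complete.
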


In \cite{Moore}, Moore showed  the following result (cf.\ \cite{ferryandborus}).

\begin{thm}[Moore \cite{Moore}]
\label{moore'sthm}

Let $M^n$ be a closed $n$-manifold and $f\colon M\rightarrow X$ a CE map, where $X$ is compact. Then there exists a metric $d$ on $X$, a contractibility function $\rho$ and a continuous path 
\[\omega\colon [0,1]\rightarrow \mathrm{LGC}(n,\rho)
\]
such that $\omega(t)$ is homeomorphic to $M$ for $t\in [0,1)$ and $\omega(1)=(X,d)$.

\end{thm}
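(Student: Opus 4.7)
The plan is to interpolate the metric on $M$ continuously toward the pull-back pseudometric $f^{*}d_{X}$, so that the fibres of $f$ shrink to points as $t\to 1$. Fix any metric $d_{M}$ on $M$ and any metric $d_{X}$ on $X$ (the latter serving as the metric $d$ in the statement). Define
\[
d_{t}(x,y) = (1-t)\,d_{M}(x,y) + d_{X}(f(x),f(y))
\]
for $t\in[0,1)$, and set $\omega(t)=(M,d_{t})$ on $[0,1)$ together with $\omega(1)=(X,d_{X})$. Because $(1-t)d_{M}$ dominates for $t<1$, the quantity $d_{t}$ is a metric inducing the original topology on $M$, and so $\omega(t)$ is homeomorphic to $M$ for every $t\in[0,1)$.

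Gromov--Hausdorff continuity is then immediate. For $s,t\in[0,1)$, the identity map $(M,d_{s})\to(M,d_{t})$ is a surjective $|s-t|\diam(M,d_{M})$-isometry. As $t\to 1$, the map $f\colon(M,d_{t})\to(X,d_{X})$ is surjective and satisfies $|d_{X}(f(x),f(y))-d_{t}(x,y)|\le(1-t)\diam(M,d_{M})$, giving the required convergence at $t=1$.

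The main technical step is producing a single contractibility function $\rho$ such that $\omega(t)\in\mathrm{LGC}(n,\rho)$ for every $t$. Fix a contractibility function $\rho_{M}$ of $(M,d_{M})$, derive one $\rho_{X}$ of $(X,d_{X})$ by lifting small spheres through $f$ using the $\epsilon$-homotopy equivalence property of CE maps, extending in $M$ via $\rho_{M}$, and pushing back down by $f$; let $\omega_{f}$ denote the modulus of continuity of $f$. Given $\alpha\colon S^{k}\to M$ with $k\le n$ and $\diam_{d_{t}}(\alpha(S^{k}))=r$, split into two regimes. In the \emph{small-scale} regime $1-t\ge\sqrt{r}$, the $d_{M}$-diameter of $\alpha(S^{k})$ is at most $r/(1-t)\le\sqrt{r}$, so $\rho_{M}$ produces an extension $\bar\alpha\colon D^{k+1}\to M$ of $d_{M}$-diameter $\le\rho_{M}(\sqrt{r})$, whence $\diam_{d_{t}}(\bar\alpha)\le\rho_{M}(\sqrt{r})+\omega_{f}(\rho_{M}(\sqrt{r}))$. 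In the \emph{large-scale} regime $1-t<\sqrt{r}$, the map $f\alpha\colon S^{k}\to X$ has $d_{X}$-diameter $\le r$; extend it to $\bar\beta\colon D^{k+1}\to X$ via $\rho_{X}$ of $d_{X}$-diameter $\le\rho_{X}(r)$, and use the $\epsilon$-homotopy equivalence property of $f$ with $\epsilon=r$, together with a homotopy inverse $g\colon X\to M$, to assemble $\bar\alpha\colon D^{k+1}\to M$ out of the approximate lift $g\bar\beta$ on an inner half-disk and the reparametrised homotopy $H(\alpha,\cdot)$ (witnessing $gf\simeq\mathrm{id}_{M}$) on an outer collar, so that $\bar\alpha|_{S^{k}}=\alpha$. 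Then
\[
\diam_{d_{t}}(\bar\alpha) \le \sqrt{r}\,\diam(M,d_{M}) + \rho_{X}(r) + O(r).
\]
Taking $\rho$ to be the maximum of the two bounds furnishes a contractibility function independent of $t$.

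The main obstacle is the collar construction in the large-scale regime: the approximate lift $g\bar\beta$ does not a priori restrict to $\alpha$ on the boundary, and one has to bridge this discrepancy using the homotopy $gf\simeq\mathrm{id}_{M}$ while keeping the total diameter bounded by a function of $r$ alone, uniformly in $t$ and in the chosen tolerance $\epsilon$. This is precisely where the CE hypothesis on $f$ is indispensable, since it supplies the $\epsilon$-homotopy equivalence property with arbitrarily small $\epsilon$ and thus allows the boundary matching and the interior extension to be reconciled within a single function of $r$.
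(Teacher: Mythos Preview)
Your approach via linear interpolation of metrics is genuinely different from Moore's. Moore metrizes the mapping cylinder $M_f$ and sets $\omega(t)=\mathrm{proj}^{-1}(t)$ for the natural projection $\mathrm{proj}\colon M_f\to[0,1]$; the uniform LGC bound then comes from a single contractibility estimate on $M_f$ rather than from a case analysis on $t$. Your construction is more explicit---the metrics $d_t$ are given by a formula and the Gromov--Hausdorff continuity is immediate---whereas the mapping-cylinder route trades explicitness for a cleaner verification of the LGC condition.

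There is, however, a real gap in your large-scale regime and in your construction of $\rho_X$. You repeatedly invoke ``the $\epsilon$-homotopy equivalence property of CE maps'' to produce a global homotopy inverse $g\colon X\to M$ and a homotopy $H\colon gf\simeq\mathrm{id}_M$ with $\epsilon$-small tracks. Lacher's theorem guaranteeing this requires \emph{both} domain and target to be ANRs, and $X$ is only assumed compact; indeed a CE image of a closed manifold can be infinite-dimensional and fail to be an ANR, which is exactly why the weaker $\mathrm{LGC}(n,\rho)$ framework is used. Without $g$ and $H$, the collar-gluing that matches $g\bar\beta|_{S^k}$ back to $\alpha$ collapses as written. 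The repair is to replace the global inverse by the relative approximate lifting property that CE maps \emph{from} ANRs enjoy with no hypothesis on the target: given $\alpha\colon S^k\to M$ and $\bar\beta\colon D^{k+1}\to X$ extending $f\alpha$, one obtains directly $\bar\alpha\colon D^{k+1}\to M$ with $\bar\alpha|_{S^k}=\alpha$ and $f\bar\alpha$ uniformly close to $\bar\beta$. Even more cleanly, you can drop $\rho_X$ and the lift-and-push-down detour entirely: the CE hypothesis gives, for every $\epsilon>0$, a $\delta>0$ such that each inclusion $f^{-1}(B_\delta(x))\hookrightarrow f^{-1}(B_\epsilon(x))$ is null-homotopic in $M$; since $\diam_{d_X}(f\alpha)\le r$ one has $\alpha(S^k)\subset f^{-1}(B_r(x_0))$, and contracting there yields $\bar\alpha$ with $\diam_{d_X}(f\bar\alpha)\le 2\epsilon(r)$ and hence $\diam_{d_t}(\bar\alpha)\le\sqrt{r}\,\diam(M,d_M)+2\epsilon(r)$, with no appeal to a map $g$.
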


Moore's proof \cite[p. 414]{Moore} relies on the fact that under the hypothesis above, the mapping cylinder $M_{f}=\frac{X\times I \coprod Y}{(x,1)\sim f(x)}$ of $f$ is metrizable and hence therefore so is $X$.

In particular, Moore does not assume that $X$ is a  metric space and, instead, constructs a metric simultaneously on $X$ and $M$. For the purposes of this paper, we need a slight refinement of the above theorem. Namely, we require that if $X$ is metric, then from this, one can construct metrics on $M$.  In view of Moore's proof \cite{Moore}, it suffices to establish the following natural Lemma (which holds in greater generality). 

Before proceeding, the authors would like to thank Alexander Lytchak for suggesting the simplification of proof to an earlier proof.

\begin{lemma}\label{thm}
Let $(X,d_{X})$ and $(Y,d_{Y})$ be compact metric spaces and $f\colon X\rightarrow Y$ a cell-like map. Then there exists a metric $\tilde{d}$ on the mapping cylinder $M_{f}$  such that $(Y,d_{Y})$ isometrically embeds into $(M_{f},\tilde{d})$ via the natural embedding. 

\end{lemma}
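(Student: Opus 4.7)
The plan is to construct $\tilde d$ as the pullback of a product metric under a Kuratowski-style embedding of $M_f$. The cell-likeness of $f$ plays no role here: only continuity of $f$ and the compactness of $X, Y$ are used, in agreement with the remark preceding the lemma that it holds in greater generality.

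Let $\iota_X : X \to C(X)$, $x \mapsto d_X(x, \cdot)$, and $\iota_Y : Y \to C(Y)$, $y \mapsto d_Y(y, \cdot)$, be the usual Kuratowski isometric embeddings into the spaces of continuous real-valued functions (bounded, since $X$ and $Y$ are compact) equipped with the sup norm. I would then define
$$
\Phi : M_f \longrightarrow C(Y) \times [0,1] \times C(X)
$$
by $\Phi([(x,t)]) = \bigl(\iota_Y(f(x)),\, t,\, (1-t)\iota_X(x)\bigr)$ for $t \in [0,1)$ and $\Phi(y) = \bigl(\iota_Y(y),\, 1,\, 0\bigr)$ for $y \in Y$. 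The factor $(1-t)$ on the third coordinate is chosen precisely so that the two formulas match in the limit $t \to 1$, which makes $\Phi$ well-defined on the identification $(x,1) \sim f(x)$. The candidate metric $\tilde d$ is the pullback of the $\ell^\infty$ product norm on the target.

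Three short verifications would complete the argument. First, $\Phi$ is continuous with respect to the mapping cylinder topology; the only nontrivial point is convergence to $y \in Y$, where if $(x_n, t_n) \to y$ then $t_n \to 1$ and $f(x_n) \to y$, and the third coordinate tends to zero via the bound $\|(1-t_n)\iota_X(x_n)\|_\infty \leq (1-t_n)\,\diam(X)$. Second, $\Phi$ is injective: comparing coordinates, equality of the second coordinate forces $t(p) = t(q)$, after which either the first coordinate (when $t=1$) and injectivity of $\iota_Y$, or the third coordinate (when $t<1$) together with $1-t > 0$ and injectivity of $\iota_X$, forces $p = q$. Third, for $y_1, y_2 \in Y$ the second and third coordinates of $\Phi(y_i)$ are identical, so
$$
\tilde d(y_1, y_2) = \|\iota_Y(y_1) - \iota_Y(y_2)\|_\infty = d_Y(y_1, y_2),
$$
which is precisely the required isometric embedding of $(Y, d_Y)$ into $(M_f, \tilde d)$.

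Since $\Phi$ is a continuous injection of a compact space into a Hausdorff space, it is a topological embedding, and the pullback of the product metric is itself a metric (not merely a pseudometric) inducing the mapping cylinder topology. The only mild obstacle lies in confirming that $\Phi$ is well-defined on the quotient and continuous across the collar $t \to 1$, both of which reduce to the same uniform estimate $\|(1-t)\iota_X(x)\|_\infty \leq (1-t)\,\diam(X)$; the compactness of $X$ is therefore what makes this direct construction succeed.
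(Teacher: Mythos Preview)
Your argument is correct and complete. The paper, however, takes a much shorter abstract route: it observes that $M_f$ is metrizable (Urysohn), notes that the natural copy of $Y$ sits inside $M_f$ as a closed subset carrying the metric $d_Y$, and then invokes Hausdorff's metric extension theorem (Theorem~\ref{hausdorffmetrization}) to extend $d_Y$ to a compatible metric $\tilde d$ on all of $M_f$. Your approach instead builds $\tilde d$ explicitly as the pullback along a Kuratowski-type embedding $\Phi\colon M_f\hookrightarrow C(Y)\times[0,1]\times C(X)$, so you avoid quoting the Hausdorff extension theorem altogether and obtain a concrete formula for $\tilde d$. The trade-off is that the paper's two-line proof is slicker and makes transparent that only metrizability of $M_f$ and closedness of $Y\subset M_f$ are being used, whereas your construction is self-contained and gives a specific metric one could compute with. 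Both proofs make clear that cell-likeness plays no role, in line with the paper's parenthetical that the lemma ``holds in greater generality.''
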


Before proceeding with the proof, we mention the following result due to Hausdorff (cf. \cite{shortproofofhausdorff} and \cite{Sakai})

\begin{thm}[Hausdorff Metric Extension Theorem]\label{hausdorffmetrization}
Let $X$ be a metrizable topological space and $A\subseteq X$ closed. Every metric on $A$ inducing the subspace topology on $A$, can be extended to a metric on $X$.

\end{thm}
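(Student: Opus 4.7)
The plan is to construct the metric $\tilde{d}$ explicitly as the pullback of a norm under an embedding of $M_f$ into a Banach space, rather than appealing to an abstract metric--extension theorem. Continuity of the cell-like map $f$ makes the equivalence relation $(x,1)\sim f(x)$ closed in $(X\times I\sqcup Y)^2$, so the mapping cylinder $M_f$ is a compact Hausdorff space. The Kuratowski embeddings
$\iota_X\colon X\hookrightarrow C(X),\ x\mapsto d_X(x,\cdot)$ and $\iota_Y\colon Y\hookrightarrow C(Y),\ y\mapsto d_Y(y,\cdot)$ provide isometric embeddings of $X$ and $Y$ with bounded image in standard Banach spaces.

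The key step is to define a single map $\Phi\colon M_f \to C(X)\oplus C(Y)\oplus \mathbb{R}$, where the target carries the $\ell^1$ norm, by
\[
\Phi([(x,t)]) \;=\; \bigl((1-t)\iota_X(x),\; t\,\iota_Y(f(x)),\; 1-t\bigr),\quad t\in[0,1),
\]
\[
\Phi([y]) \;=\; (0,\iota_Y(y),0),\quad y\in Y,
\]
and set $\tilde{d}(p,q):=\|\Phi(p)-\Phi(q)\|_{\ell^1}$. The damping factor $(1-t)$ in the first slot and the coordinate $1-t$ in the third slot are tuned so that the formula glues continuously along the seam $\{t=1\}\sim Y$, while the second slot carries enough information to recover the $Y$-metric.

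Three verifications complete the argument. First, \emph{well-definedness and continuity}: as $t\to 1^-$, $(1-t)\iota_X(x)\to 0$ by boundedness of $\iota_X(X)$, $t\,\iota_Y(f(x))\to \iota_Y(f(x))$ by continuity of $f$, and $1-t\to 0$, so limits from the cylinder match the formula on $Y$. Second, \emph{injectivity}: the third coordinate separates the open cylinder $\{t<1\}$ from $Y$; on a slice $\{t=t_0<1\}$ the first coordinate $(1-t_0)\iota_X(x)$ recovers $x$; on $Y$, $\iota_Y$ is injective. Since $M_f$ is compact and the target is Hausdorff, $\Phi$ is a topological embedding, so $\tilde{d}$ is a metric inducing the quotient topology. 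Third, \emph{isometric embedding of $Y$}: for $y_1,y_2\in Y$,
\[
\tilde{d}(y_1,y_2) \;=\; \|(0,\iota_Y(y_1),0) - (0,\iota_Y(y_2),0)\|_{\ell^1} \;=\; \|\iota_Y(y_1)-\iota_Y(y_2)\|_{C(Y)} \;=\; d_Y(y_1,y_2).
\]

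The main obstacle I expect is the continuity of $\Phi$ at the gluing seam $\{t=1\}\sim Y$; this is the unique place where the quotient identification imposes a nontrivial compatibility constraint. Handling it uses compactness of $X$ (to bound $\iota_X(X)$ so the damping factor $(1-t)$ genuinely kills the first component) together with continuity of $f$. The cell-like hypothesis itself plays no role beyond ensuring $f$ is a continuous map; the lemma as stated would hold for any continuous $f$ between compact metric spaces.
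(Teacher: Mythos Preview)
Your proposal does not prove the stated theorem. The Hausdorff Metric Extension Theorem concerns an arbitrary metrizable space $X$ and an arbitrary closed subset $A\subseteq X$; there is no mapping cylinder, no map $f$, and no second space $Y$ in its hypotheses. What you have written is instead a direct proof of the \emph{preceding} lemma in the paper (Lemma~\ref{thm}): that the mapping cylinder $M_f$ of a continuous map between compact metric spaces carries a metric into which $(Y,d_Y)$ embeds isometrically. Your own opening sentence acknowledges this, stating that you will construct $\tilde d$ on $M_f$ ``rather than appealing to an abstract metric--extension theorem''---i.e., rather than proving or invoking the very statement you were asked to establish.

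If the intended target was in fact Lemma~\ref{thm}, then your argument is correct and gives a genuinely different route from the paper's. The paper deduces Lemma~\ref{thm} in two lines: Urysohn's theorem gives that $M_f$ is metrizable, and then the Hausdorff extension theorem (stated without proof, with a citation) extends the given metric on the closed subset $Y\subset M_f$ to all of $M_f$. Your construction bypasses both black boxes by exhibiting an explicit embedding $\Phi\colon M_f\hookrightarrow C(X)\oplus C(Y)\oplus\mathbb{R}$ via damped Kuratowski maps, and pulling back the $\ell^1$ norm. The verifications you sketch (continuity across the seam using boundedness of $\iota_X(X)$, injectivity via the third coordinate, and the isometry on $Y$) all go through; compactness of $M_f$ then upgrades the continuous injection to a homeomorphism onto its image. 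This approach is more self-contained and constructive than the paper's, and---as you correctly observe---uses only continuity of $f$, not the cell-like hypothesis. The trade-off is that the paper's two-line argument is shorter and isolates the general principle (Hausdorff extension) that does the real work.

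But as a proof of the Hausdorff Metric Extension Theorem itself, the proposal is simply off-target: nothing in it addresses the general situation of an arbitrary closed $A$ inside an arbitrary metrizable $X$.
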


\begin{proof} (Of Lemma \ref{thm})
 Urysohn's theorem gives that $M_{f}$ is metrizable. To find the metric $\tilde{d}$ on the mapping cylinder, one merely needs to apply Hausdorff's extension theorem to $M_f$ and $Y$. For the convenience of the reader, we supply the details. There exists a natural embedding $i\colon Y\hookrightarrow M_f$. Thus one can endow $i(Y)$ with a metric for which $i(Y)$ then becomes isometric to $Y$. Thus one applies Hausdorff's extension theorem, Theorem \ref{hausdorffmetrization} to extend this metric to a metric on $M_f$.

\end{proof}

Combining Moore's proof \cite{Moore}, with the previous lemma, one immediately obtains the following Corollary.

\begin{corollary}\label{refinementofmoore'sthm}
Let $M^n$ be a closed $n$-manifold and $f\colon M\rightarrow (X,d)$ a CE map.   Here $(X,d)$ is a compact metric space. Then there exists a contractibility function $\rho$ and a continuous path $\omega \colon [0,1]\rightarrow LGC(n,\rho)$ such that $\omega(t)$ is homeomorphic to $M$ for $t\in [0,1)$ and $\omega(1)=(X,d)$.
\end{corollary}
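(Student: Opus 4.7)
The plan is to reproduce Moore's original argument inside the mapping cylinder $M_f = (M\times I \sqcup X)/((x,1)\sim f(x))$, with the single modification that the metric on $M_f$ is chosen to restrict to the prescribed metric $d$ on the embedded copy $X\subset M_f$. In Moore's proof, the path $\omega$ is realized concretely as (the metric subspaces induced by) the levels of the mapping cylinder; so all that is needed is to show that those levels can be made metrically compatible with the given $d$ at the top.

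First, I would apply Lemma \ref{thm} to produce a metric $\tilde{d}$ on $M_f$ whose restriction to the embedded $X$ equals $d$. Then I would define $\omega(t) = (M\times\{t\},\tilde{d}|_{M\times\{t\}})$ for $t\in[0,1)$, which is homeomorphic to $M$, and $\omega(1) = (X,d)$, which by construction is isometric to the top slice of the cylinder. This immediately settles the topological requirement that $\omega(t)\cong M$ for $t<1$ and $\omega(1) = (X,d)$.

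Next I would verify Gromov--Hausdorff continuity of $\omega$. On $[0,1)$ this is clear from the joint continuity of $\tilde{d}$ on the compact space $M_f$: nearby slices give uniformly close metrics on $M$. Continuity at $t = 1$ follows because, as $t \to 1$, each $(x,t)\in M_f$ converges to $f(x)\in X$, so the map $M\times\{t\}\to X$, $(x,t)\mapsto f(x)$, is an $\epsilon_t$-isometry with $\epsilon_t\to 0$, whence $\omega(t)\to_{GH}(X,d)$.

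The main obstacle is producing a single contractibility function $\rho$ for which every $\omega(t)$ lies in $\mathrm{LGC}(n,\rho)$. The key observation is that, because $f$ is cell-like with target the ANR $X$ and source the manifold $M$, the mapping cylinder $M_f$ is itself a compact metric ANR, so by Proposition \ref{anrcharac} it admits some contractibility function $\rho_0$ with respect to $\tilde d$. Following Moore, one contracts a small ball in a slice $\omega(t)$ by first contracting it inside $(M_f,\tilde d)$ using $\rho_0$, and then pushing the contraction back into the slice via the collapse along the $I$-direction of the cylinder (or, at $t=1$, directly by the ANR structure on $X$). The compactness of $[0,1]$ controls the cost of this push-back uniformly in $t$, yielding a single $\rho$ that works for every $\omega(t)$ and completes the construction of the path.
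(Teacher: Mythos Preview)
Your proposal is essentially the paper's proof: apply Lemma~\ref{thm} to obtain a metric $\tilde d$ on the mapping cylinder $M_f$ restricting to $d$ on $X$, set $\omega(t)=\mathrm{proj}^{-1}(t)$ for the natural projection $M_f\to[0,1]$, and invoke Moore's original argument for the uniform $\mathrm{LGC}(n,\rho)$ bound on the slices. The paper's write-up is terser (it simply cites Moore for everything after the metric is produced), but the content is the same.

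One small caution in your sketch of the Moore step: the corollary does not assume $X$ is an ANR, and since $X$ is a retract of $M_f$, the mapping cylinder $M_f$ need not be an ANR either; hence your appeal to Proposition~\ref{anrcharac} for $M_f$ is not literally correct in full generality. Moore's argument obtains the $\mathrm{LGC}(n,\rho)$ property for the slices directly from the cell-likeness of $f$ (which forces $X$, and uniformly the nearby slices, to be $\mathrm{LGC}(n,\cdot)$ without the full ANR conclusion). This does not affect your overall strategy, which matches the paper's.
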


\begin{proof}
Let $M_f$ be the mapping cylinder of $f$. By Lemma \ref{thm}, there exists a metric $\tilde{d}$ on $M_f$ for which $(X,d)$ isometrically embeds into $(M_f,\tilde{d})$. Following \cite{Moore}, consider the projection map $\mathrm{proj}\colon M_f\rightarrow [0,1]$. The map $\omega(t)= \mathrm{proj}^{-1}(t)$ yields the desired path.

\end{proof}

We will rely frequently on the following refinement of Moore's theorem:

\begin{thm}[Ferry and Okun \cite{ferryandborus}]
\label{ferryandokun}

Let $ f\colon M\rightarrow (X,d)$ be a CE map, where $(X,d)$ is a geodesic metric space, and $M$ is a connected, closed manifold of dimension at least $3$ that is smoothable. Then there exists a sequence of Riemannian metrics $g_i$ on $M_i$ such that $(M,d_{g_i})\rightarrow_{GH}(X,d)$. Moreover, the sequence $\{(M,d_{g_i})\}_{i=1}^{\infty}$ admits a uniform contractibility function $\rho$.

\end{thm}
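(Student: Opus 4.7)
The strategy is to combine Corollary~\ref{refinementofmoore'sthm} with a Riemannian smoothing argument. Applying that corollary to the cell-like map $f\colon M \to (X,d)$ produces a contractibility function $\rho$ and a continuous path $\omega\colon[0,1]\to\mathrm{LGC}(n,\rho)$ with $\omega(t)\cong M$ for $t<1$ and $\omega(1)=(X,d)$. Choosing any sequence $t_i\nearrow 1$ yields a sequence of metrics $d_i$ on $M$ with $(M,d_i)\to_{GH}(X,d)$, and all $(M,d_i)$ share the same contractibility function $\rho$. Since $(X,d)$ is geodesic, the mapping-cylinder metric produced in Lemma~\ref{thm} can be chosen to be a length metric, so each $d_i$ may be taken to be intrinsic.

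The heart of the proof is to replace each $d_i$ by a Riemannian metric $g_i$ on $M$ whose induced distance is Gromov--Hausdorff close to $d_i$. Fix a smooth structure on $M$ (which exists by hypothesis) together with an auxiliary background Riemannian metric. For each $i$, pick an $\epsilon_i$-net $\{p_1,\dots,p_{k_i}\}\subset M$ with $\epsilon_i\to 0$ and form the weighted graph $G_i$ on these vertices whose edge $p_jp_k$ carries the weight $d_i(p_j,p_k)$ whenever $d_i(p_j,p_k)\leq 3\epsilon_i$. Using the hypothesis $\dim M\geq 3$ to perform general-position smooth isotopies, realize $G_i$ as an embedded smooth graph in $M$ and thicken it into a smooth handle decomposition of $M$ whose $1$-skeleton realizes the prescribed edge lengths. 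The resulting PL length structure on $M$ is within a bounded multiple of $\epsilon_i$ of $d_i$ in the GH distance. Applying the classical Whitehead--Munkres smoothing, approximate the PL metric uniformly by a Riemannian metric $g_i$ on $M$; then $(M,d_{g_i})\to_{GH}(X,d)$.

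The main technical obstacle is to extract a single uniform contractibility function valid for the whole sequence $\{(M,d_{g_i})\}$. Because the $(M,d_i)$ all satisfy $\rho$, the PL approximants can be arranged to satisfy a slightly coarsened function $\rho'$, using the openness of the contractibility condition together with Proposition~\ref{anrcharac}. Once the smoothing is fine enough, Theorem~\ref{petersen} supplies an $\eta_i$-homotopy equivalence between $(M,d_i)$ and $(M,d_{g_i})$; transporting null-homotopies of small balls across these equivalences produces a further slight enlargement of $\rho'$, independent of $i$, that serves as the desired uniform contractibility function. The most delicate point is the simultaneous control of the smoothing error and the tracks of these homotopy equivalences; choosing the nets and smoothings sufficiently fine at each stage, depending only on the modulus of continuity of $\rho$ at $0$, closes the argument.
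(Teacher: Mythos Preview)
The paper does not supply a proof of this theorem: it is quoted from Ferry and Okun \cite{ferryandborus} and used as a black box. So there is no in-paper argument to compare against, and you should treat this as a cited external result rather than something to re-derive from the surrounding material.

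That said, your sketch has two genuine gaps if read as an independent proof. First, you assert that the mapping-cylinder metric from Lemma~\ref{thm} ``can be chosen to be a length metric'' because $(X,d)$ is geodesic. But Lemma~\ref{thm} is proved via Hausdorff's metric extension theorem (Theorem~\ref{hausdorffmetrization}), which produces an arbitrary compatible metric with no length-space control whatsoever. Obtaining an intrinsic metric on the mapping cylinder that restricts to the given $d$ on $X$ is exactly one of the points Ferry and Okun have to work for; it does not come for free from the machinery in this paper.

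Second, and more seriously, the step ``realize $G_i$ as an embedded smooth graph in $M$ and thicken it into a smooth handle decomposition of $M$ whose $1$-skeleton realizes the prescribed edge lengths; the resulting PL length structure on $M$ is within a bounded multiple of $\epsilon_i$ of $d_i$'' is the entire content of the theorem, and it is only asserted. Embedding a weighted graph and thickening it does not by itself produce a metric on all of $M$: you must specify the metric on the higher-dimensional handles, and then argue that geodesics in the resulting Riemannian manifold actually shadow graph paths rather than taking shortcuts through the thickened cells. Controlling this requires a careful choice of the handle geometry (short, thin tubes versus the given edge lengths) and an inductive argument over the skeleta. Your ``Whitehead--Munkres smoothing'' line likewise hides the real work: smoothing a PL structure does not automatically yield Gromov--Hausdorff closeness of the induced distances. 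The uniform-contractibility transport at the end via Theorem~\ref{petersen} is plausible once the metrics are genuinely close, but as written it rests on the unjustified approximation step.
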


We now give examples of which spaces one might obtain under convergence with a uniform contractibility function.

\begin{exmp}[Ferry and Okun, \cite{ferryandborus}]
There exists an infinite dimensional space $X$, a sequence of Riemannian metrics $\{g_i\}_{i=1}^\infty$ on $\mathbb{S}^5$, and a uniform contractibility function $\rho$ for $\{(\mathbb{S}^5,d_{g_i})\}_{i=1}^\infty$ such that $(\mathbb{S}^5,d_{g_i})\rightarrow_{GH} X$.  This example illustrates that the limit of finite-dimensional spaces might be infinite-dimensional even with a uniform contractibility function.
\end{exmp}

\begin{exmp}[Moore, \cite{Moore}]
There exists a uniform contractibility function $\rho$ for a family of compact metric manifolds $X_i$ homeomorphic to $\mathbb{S}^3$ and a finite dimensional non-manifold $X$ such that $X_i\rightarrow_{GH}X$.
\end{exmp}

\begin{exmp}[Ghastly examples]
\label{GhastlyExamples}
Combining the results of Davermann and Walsh \cite{ghastly} and Moore \cite{Moore}, one can find, for $n\geq 3$, spaces $X^n$ and metrics $d_i$ on $\mathbb{S}^n$ such that $(\mathbb{S}^n,d_i)\rightarrow_{GH}X^n$ and for which there does not exists a cell-like map from $X^n$ onto any $n$-manifold. In particular $X^n$ cannot be an $n$ manifold. The space $X^n$ satisfies the following properties:

\begin{enumerate}
    \item $X^n$ does not contain any ANR of dimensions strictly between $1$ and $n$.
    \item For each map $f\colon D^2\rightarrow X$ where $f|_{\mathbb{S}^1}$ is an embedding,  $f(D^2)$ has non-empty interior in $X$.
\end{enumerate}

\end{exmp}

The examples above suggest that under  non-collapsed convergence, in which the approximating manifolds admit a uniform contractibility function, the limit is a homology manifold. Indeed, this is always true:

\begin{thm}[Grove, Petersen, and Wu \cite{GPW}] \label{GPWthm}
Let $\{X_i\}_{i=1}^{\infty}$ be a sequence of closed metric $n$-manifolds admitting a uniform contractibility function. If $X_i\rightarrow_{GH}X$ and $X$ is finite dimensional, then $X$ is a resolvable homology manifold.

\end{thm}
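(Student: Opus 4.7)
The plan is to reduce to Theorem \ref{main thm}. Three moves suffice: (i) show the uniform contractibility hypothesis passes to the limit $X$ in the sense that $X\in\mathrm{LGC}(n,\tilde\rho)$ for a new contractibility function $\tilde\rho$ depending only on $\rho$; (ii) invoke Proposition \ref{anrcharac} to conclude $X$ is an ANR; (iii) apply Theorem \ref{main thm} to produce a cell-like map $X_i\to X$ for $i$ large, which realizes $X$ as a cell-like image of an $n$-manifold and thus as a resolvable homology manifold.

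For step (i), each $X_i$ already belongs to $\mathrm{LGC}(n,\rho)$: a map $\alpha\colon\mathbb{S}^k\to X_i$ with $k\leq n$ and small diameter sits in a small ball that contracts in a ball of controlled radius, yielding the required disk extension. To produce an analogous extension for a small map $\alpha\colon\mathbb{S}^k\to X$, I would fix a very fine triangulation $T$ of $\mathbb{S}^k$, choose an $\epsilon_i$-isometry $\phi_i\colon X_i\to X$ with $\epsilon_i$ small relative to $\mathrm{diam}(\alpha(\mathbb{S}^k))$, and lift the $\alpha$-images of the vertices of $T$ to $X_i$. Inductively extending over the skeleta of $T$ using the $\mathrm{LGC}(n,\rho)$ property of $X_i$ produces a continuous map $\beta\colon\mathbb{S}^k\to X_i$ of controlled diameter with $\phi_i\circ\beta$ uniformly close to $\alpha$. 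Extending $\beta$ over $D^{k+1}$ via the contractibility of $X_i$, pushing forward by $\phi_i$, and bridging the remaining small gap by one more LGC-interpolation in $X$ yields a disk extension of $\alpha$ in $X$ of diameter controlled by a function $\tilde\rho$ that depends only on $\rho$ and $n$. This is the standard nerve-approximation argument and is the one technical step.

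Steps (ii) and (iii) are then immediate. Proposition \ref{anrcharac}, together with the finite dimensionality of $X$, gives that $X$ is an ANR; Theorem \ref{main thm} then produces, for $n\geq 4$ and $i$ large, a cell-like map $f\colon X_i\to X$, exhibiting $X$ as a homology $n$-manifold (since cell-like images of $n$-manifolds are homology $n$-manifolds) and as a resolvable homology manifold by Definition \ref{def-of-resolution}. For $n\leq 3$ the conclusion is handled separately using the classification of low-dimensional manifolds and, in dimension three, Jakobsche's $\alpha$-approximation theorem together with the resolution of the Poincar\'e conjecture. The main obstacle, and really the only substantive point, is step (i): Gromov--Hausdorff $\epsilon$-isometries need not be continuous, so the nerve argument must be arranged carefully enough that the new contractibility function $\tilde\rho$ depends only on $\rho$ and $n$, uniformly in $i$; once this is achieved, everything else is assembly of the material recorded in Section \ref{section preliminaries and historical remarks}.
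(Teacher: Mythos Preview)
Your proposal has a genuine circularity problem. Theorem~\ref{GPWthm} is a cited preliminary result from \cite{GPW}; the paper does not supply its own proof. More importantly, the paper \emph{uses} Theorem~\ref{GPWthm} as an input to Theorem~\ref{main thm}: the very first line of the first proof in Section~\ref{section proof of Thm} reads ``By \cite{GPW}, $X$ is a resolvable homology manifold.'' So reducing Theorem~\ref{GPWthm} to Theorem~\ref{main thm} is circular. The second proof of Theorem~\ref{main thm} is not an escape route either: it explicitly ``follow[s] the proof of the main convergence result in \cite{GPW}'', and its application of the $\alpha$-approximation theorem to the map $X_i\times\mathbb{T}^2\to X\times\mathbb{T}^2$ presupposes that $X\times\mathbb{T}^2$ is a manifold, which is precisely the content of Grove--Petersen--Wu's argument (together with Quinn's Theorem~\ref{quin equiv}).

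Your steps (i)--(ii) are fine and are in fact how the original GPW argument begins: one shows that the $\mathrm{LGC}(n,\rho)$ condition passes to the Gromov--Hausdorff limit via a skeleton-by-skeleton nerve approximation, and then invokes Proposition~\ref{anrcharac} to get that $X$ is an ANR. But from that point the actual work in \cite{GPW} is to show, via Petersen's Theorem~\ref{petersen}, the $\alpha$-approximation theorem, and the thin $h$-cobordism machinery, that a stabilization of $X$ is a manifold and hence that $X$ is resolvable. Your step (iii) attempts to replace this work by a black-box appeal to Theorem~\ref{main thm}, but since Theorem~\ref{main thm} logically sits \emph{downstream} of Theorem~\ref{GPWthm} in this paper, the reduction cannot stand on its own.
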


\subsection{Product Structure Theorem}

In this section, we will first recall the \textit{product structure theorem}, a fundamental result that we will rely on. We refer the reader to \cite{QuinnProductStructureTheorem} and \cite{Kirby-Siebenmann} for proofs of this theorem. Then, we will define the Kirby--Siebenmann invariant and mention some of its properties. In this section, we will only concern ourselves with the categories $\mathrm{DIFF}$ and $\mathrm{PL}$. 

\begin{thm}[Product structure theorem] 
\label{Product Structure Theorem}
Suppose $M^n$ is a metrizable topological manifold without boundary and of dimension $n\geq 5$. Let $\Gamma$ be a $\mathrm{DIFF}$ or $\mathrm{PL}$ structure on $M\times \mathbb{R}^q$ for some $q\geq 1$. Then, there exists a concordant $\mathrm{CAT}$ structure $\Sigma \times \mathbb{R}^q$ on $M\times \mathbb{R}^q$, where $\Sigma$ is a $\mathrm{DIFF}$ or $\mathrm{PL}$ structure on $M$.

\end{thm}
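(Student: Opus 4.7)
The plan is to follow the classical Kirby--Siebenmann strategy, reducing the global statement to a local claim on Euclidean space and handling the local claim via the torus trick.

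First I would set up a local-to-global induction based on concordance extension. Cover $M$ by coordinate charts $U_\alpha \cong \mathbb{R}^n$ and process them one at a time. At each stage, assume the given structure $\Gamma$ has already been exhibited, over the union of previously treated charts, as concordant to a product structure $\Sigma_{\mathrm{old}} \times \mathbb{R}^q$; to extend across a new chart $U_\beta$, one applies the local version of the theorem on $U_\beta \cong \mathbb{R}^n$ to produce a product concordance there, and then uses the relative CAT concordance extension theorem to splice it with the concordance already fixed on the overlap. This step needs concordances of CAT structures on $\mathbb{R}^n \times \mathbb{R}^q \times I$ to be straightenable relative to a closed subset, which holds provided the ambient dimension is at least six, and is one of the places where the hypothesis $n \geq 5$ enters.

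The heart of the argument is the local case: every CAT structure $\Gamma$ on $\mathbb{R}^n \times \mathbb{R}^q$ is concordant to a product $\Sigma_0 \times \mathbb{R}^q$. I would invoke the torus trick. Since the tangent bundle of $T^n$ is trivial, Hirsch--Smale immersion theory produces an immersion $\alpha\colon T^n \smallsetminus \{\mathrm{pt}\} \to \mathbb{R}^n$. Crossing with $\mathrm{id}_{\mathbb{R}^q}$ and pulling $\Gamma$ back along $\alpha \times \mathrm{id}$ yields a CAT structure on $(T^n \smallsetminus \{\mathrm{pt}\}) \times \mathbb{R}^q$; near the missing point the pulled-back structure is standard, so one fills in to obtain a CAT structure $\Gamma'$ on the whole of $T^n \times \mathbb{R}^q$. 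Now pass to a finite cover $T^n \to T^n$ of sufficiently large degree. The key algebraic input is that pullback along such a cover kills the cohomology classes in $H^{\ast}(T^n;\pi_{\ast}(\mathrm{TOP}/\mathrm{CAT}))$ which classify concordance classes of CAT structures on $T^n \times \mathbb{R}^q$, so after lifting, $\Gamma'$ becomes concordant to a genuine product $\Sigma' \times \mathbb{R}^q$. Pushing this product structure back down to $T^n$, restricting to $T^n \smallsetminus \{\mathrm{pt}\}$, and then transporting via the local diffeomorphism $\alpha$ produces a product structure on $\mathbb{R}^n \times \mathbb{R}^q$ concordant to $\Gamma$, as required.

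The main obstacle is the torus straightening step. One must know both that $\mathrm{TOP}/\mathrm{CAT}$ is sufficiently highly connected and that its relevant higher homotopy groups are small enough that the corresponding obstruction classes on $T^n$ are killed by multiplicative pullback; together these facts give a parametrized classification of CAT structures on $T^n \times \mathbb{R}^q$ up to concordance. This is the deep input (due to Kirby and Siebenmann in the $\mathrm{PL}$ case, with the smoothing results of Munkres, Hirsch, and Mazur handling the $\mathrm{DIFF}$ case), and it is precisely what forces the dimensional hypothesis $n \geq 5$; in dimension four the analogous statements fail and the Product Structure Theorem itself is false. Everything else, including the immersion construction and the gluing via concordance extension, is essentially formal once this input is in hand.
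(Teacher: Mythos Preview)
The paper does not prove this theorem at all; it is stated as a classical result with references to Kirby--Siebenmann and to Quinn's exposition, and the proof is entirely deferred to those sources. So there is no ``paper's own proof'' to compare against.

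Your outline is the standard Kirby--Siebenmann torus-trick strategy, and the overall architecture (chart-by-chart reduction via concordance extension, immersion of the punctured torus, pulling back the structure, passing to a finite cover, transporting back) is correct. However, your identification of the key input is logically circular. You invoke a classification of CAT structures on $T^n\times\mathbb{R}^q$ by obstruction classes in $H^{\ast}(T^n;\pi_{\ast}(\mathrm{TOP}/\mathrm{CAT}))$, but that classification---and indeed the very computation of $\pi_{\ast}(\mathrm{TOP}/\mathrm{CAT})$---is a \emph{consequence} of the Product Structure Theorem, not an input to it. In the original argument the deep input at the torus-straightening step is instead the surgery-theoretic classification of PL structures on tori (Hsiang--Shaneson, Wall): every PL homotopy torus of dimension $\geq 5$ becomes standard after passing to a suitable finite cover. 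That is what makes the lifted structure a product, and it is what actually uses $n\geq 5$. If you rewrite that paragraph to cite the structure-set computation for $T^n$ rather than homotopy groups of $\mathrm{TOP}/\mathrm{CAT}$, the sketch would be an honest summary of the Kirby--Siebenmann proof.
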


We will not define what a ``concordance'' is, as it is not necessary for this paper. Instead, we refer the reader to \cite[Essay IV]{Kirby-Siebenmann}. 

The following theorem addresses the case $n=4$.

\begin{thm}[Kirby--Siebenmann invariant \cite{Kirby-Siebenmann}]
\label{kirby-siebenmann invariant}
Suppose  $M^n$ is a topological manifold without boundary and $n\geq 5$. Then there exists an obstruction, $\mathrm{ks}(M)\in H^4(M,\mathbb{Z}_2)$ such that $\mathrm{ks}(M)=0$ if and only if $M$ admits a $\mathrm{PL}$ structure.  If $n=4$, then $\mathrm{ks}(M)=0$ implies that $M\times \mathbb{R}$ is smoothable.
\end{thm}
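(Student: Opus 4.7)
The plan is to convert the existence of a PL structure on $M$ into a lifting problem for the classifying map of the stable tangent microbundle and then read off cohomological obstructions from the homotopy type of the fiber $TOP/PL$. Concretely, for $n\geq 5$, I would combine the product structure theorem (Theorem \ref{Product Structure Theorem}) with Lees-type immersion theory to identify PL structures on $M$ (up to concordance) with reductions of the stable tangent microbundle along the fibration $BPL\to BTOP$ whose homotopy fiber is $TOP/PL$. The product structure theorem is precisely what lets one pass between stable PL structures on $M\times \mathbb{R}^{q}$ and genuine PL structures on $M$, so the question becomes purely the existence of a section of the associated $TOP/PL$-bundle over $M$.

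Obstruction theory applied to such a lift produces classes in $H^{i+1}(M;\pi_{i}(TOP/PL))$, so everything reduces to computing the homotopy groups of $TOP/PL$. This is the central and hardest step: one has to show that $TOP/PL$ is an Eilenberg--MacLane space $K(\mathbb{Z}_{2},3)$. In low degrees, $\pi_{i}(TOP/PL)=0$ for $i\leq 2$ follows from Moise's theorem together with the classical fact that homeomorphisms of low-dimensional PL manifolds can be isotoped to PL ones. The nonzero group $\pi_{3}(TOP/PL)\cong\mathbb{Z}_{2}$ comes from Rokhlin's signature congruence combined with Freedman's realization of the $E_{8}$ intersection form: a closed topological $4$-manifold with nontrivial Kirby--Siebenmann class that admits no PL structure provides the generator. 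For $i\geq 4$, the vanishing $\pi_{i}(TOP/PL)=0$ is established via the Kirby--Siebenmann torus trick, which straightens handles in dimensions $\geq 5$ and reduces higher-degree obstructions to statements about PL structures on the torus that can be handled by passage to a finite cover.

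With $TOP/PL\simeq K(\mathbb{Z}_{2},3)$ established, obstruction theory collapses to a single primary class $\mathrm{ks}(M)\in H^{4}(M;\mathbb{Z}_{2})$ whose vanishing is equivalent to the existence of a PL structure on $M$ for $n\geq 5$. For the case $n=4$, one takes the product $M\times\mathbb{R}$, a $5$-manifold whose Kirby--Siebenmann class is the pullback of $\mathrm{ks}(M)$ under the projection; if $\mathrm{ks}(M)=0$, the preceding argument yields a PL structure on $M\times\mathbb{R}$, and since the obstructions to smoothing a PL $n$-manifold live in $H^{i+1}(M;\pi_{i}(PL/O))$ and $\pi_{i}(PL/O)=0$ for $i\leq 6$, the $5$-manifold $M\times\mathbb{R}$ is smoothable. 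The main obstacle throughout is the computation of the homotopy groups of $TOP/PL$: both the torus trick (for the vanishing in degrees $\neq 3$) and the use of Rokhlin's theorem together with Freedman's construction (for the nontriviality in degree $3$) are substantial classical inputs, and it is essentially this input that makes the theorem a deep result rather than a routine consequence of obstruction theory.
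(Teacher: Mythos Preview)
The paper does not prove this statement at all: Theorem~\ref{kirby-siebenmann invariant} is quoted as a background result from \cite{Kirby-Siebenmann} in the preliminaries section, with no proof given. So there is no ``paper's own proof'' to compare against; your proposal is an outline of the classical Kirby--Siebenmann argument itself.

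That said, your sketch is broadly faithful to how the theorem is actually established. One historical/logical wrinkle: you invoke Freedman's $E_{8}$ manifold to realize the generator of $\pi_{3}(TOP/PL)$, but the Kirby--Siebenmann computation predates Freedman by over a decade and does not rely on it. In the original argument, Rokhlin's theorem shows that $\pi_{3}(TOP/PL)$ has order at least $2$ (by comparing with $\pi_{3}(TOP/O)$ via the signature-mod-$16$ defect), while the upper bound comes from the PL classification of homotopy tori due to Hsiang--Shaneson and Wall together with the torus trick, not from exhibiting a non-PL $4$-manifold. Freedman's work later provided geometric examples witnessing nontrivial $\mathrm{ks}$ in dimension $4$, but it is not an input to the proof of the theorem as stated. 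Apart from this, your reduction via the product structure theorem, the lifting problem along $BPL\to BTOP$, and the smoothing argument for $M\times\mathbb{R}$ using $\pi_{i}(PL/O)=0$ for $i\leq 6$ are all standard and correct.
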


The obstruction $\mathrm{ks}(M)$ in the previous theorem is known as the \textit{Kirby--Siebenmann invariant}.
For some of the properties of the Kirby--Siebenmann invariant, we refer the reader to \cite{applicationsofKSindim4} and references therein.

\begin{remark}
The Kirby--Siebenmann invariant will play a very minor role in this paper. Its usefulness will be apparent in section \ref{section examples}, where it will allow us to reformulate results in a more appealing manner (see Corollary \ref{obstructionvanishing}).
\end{remark}

\begin{exmp}
\label{E_8mfld}
The product structure theorem in dimension $n=4$ is false in general. 
Indeed, there exists a closed topological $4$-manifold $M^4$ such that $M$ is not smoothable, but $\mathrm{ks}(M)=0$ and hence, $M\times \mathbb{R}$ is smoothable. The manifold $M^4$ is constructed by the so-called $E_{8}$ manifold, due to Freedman \cite{4-dpoincare}. More precisely, $E_{8}\# E_{8}$ is not smoothable \cite{DonaldsonI,DonaldsonII}, but its Kirby--Siebenmann invariant vanishes. We will use this example in section \ref{section examples}.
\end{exmp}

\section{Proofs of Theorem \ref{main thm}}
\label{section proof of Thm}

\begin{proof}[Proof of Theorem \ref{main thm}]
\label{firstproof}
Let $\{X_i^n\}_{i=1}^{\infty}$ be a sequence of closed metric topological $n$-manifolds with a uniform contractibility function $\rho$ converging to $X$ in the Gromov--Hausdorff sense. By \cite{GPW}, $X$ is a resolvable homology manifold. In particular,  $X$ is resolvable by some manifold, say $M$. We will show that for large enough $i$, $X_i$ is homeomorphic to $M$. This follows from combining the results in \cite{GPW} with Corollary \ref{refinementofmoore'sthm} as follows (the following argument will also be used in Section \ref{section examples}). By Corollary ~\ref{refinementofmoore'sthm}, there exists a sequence $\{d_i\}_{i=1}^\infty$ of metrics  on $M$ such that $(M,d_i)\rightarrow_{GH}X$ and each $(M,d_i)$ is in $\mathrm{LGC}(n,\sigma)$ for some fixed contractibility function $\sigma$. Denote $(M,d_i)$ by $M_i$.

By intertwining the sequences $\{M_i\}_{i=1}^{\infty}$ and $\{X_i^n\}_{i=1}^{\infty}$, we will construct a sequence $\{Z_i\}_{i=1}^{\infty}$ of $n$-manifolds that are in some $\mathrm{LGC}(n,\rho')$. For example, define $\{Z_i\}_{i=1}^{\infty}$ by setting $Z_{\frac{i^2}{2}+\frac{3i}{2}}=M_{\frac{i^2}{2}+\frac{3i}{2}}$  for $i\geq 1$  and $Z_k=X_k$ otherwise. The maximum of the contractibility functions $\rho$ and $\sigma$, on the appropriate domain, yields a uniform contractibility function $\rho'$ for the sequence $\{Z_i\}_{i=1}^\infty$. Observe that $Z_i\rightarrow_{GH}X$ and that the $Z_i$ are all in $\mathrm{LGC}(n,\rho')$.  The Grove--Petersen--Wu stability theorem \cite{GPW,Grove-Petersen-Wu-erratum} implies that, for large $i,j$, the space $Z_i$ is homeomorphic to $Z_j$. In particular, $X_i$ is homeomorphic to $M$. Since $X$, $X_i$, and $M$ are all ANR's, it follows that $X$ is a cell-like image of $X_i$ (see Example \ref{compositionanr's}). Finally, the fact that $f$ can be taken to be open follows from the results due to Walsh \cite{Walshopenmappings} (cf. \cite[p. 489]{Sakai}).
\end{proof}

As mentioned in the introduction, we now offer a more classical approach proof to Theorem \ref{main thm}, relying on a more explicit procedure for constructing resolutions. In particular, the next proof gives a more concrete method of constructing the desired resolution.  Further, as mentioned in the introduction, the ideas appear in several different contexts.

Throughout the next argument, we will denote by $\epsilon_i$ arbitrary positive constants that tend to $0$ as $i\rightarrow \infty$. In particular, any positive value proportional to $\epsilon_i$ will also be denoted by $\epsilon_i$.  Sufficiently small positive values will be denoted by $\epsilon$.

\begin{proof}
(Second proof of Theorem \ref{main thm})
    
    \label{secondproof}
We stress that we will follow the proof of the main convergence result in \cite{GPW} and then combine it with an argument in \cite[p.\ 444]{topologyofhomologymanifolds}.  For large $i$, let $f_i\colon X_i\rightarrow X$ be an $\epsilon_i$-homotopy equivalence that is almost distance-preserving up to an error $\epsilon_i$. Such maps exist by \cite{petersenfinitenesstheoremformetricspaces}. Let $\epsilon>0$. By the $\alpha$-approximation theorem, there exists an $i(\epsilon)$ such that for all $i\geq i(\epsilon)$, each $(\mathrm{f}_i,\mathrm{id},\mathrm{id})\colon X_i\times \mathbb{S}^1\times \mathbb{S}^1 \rightarrow X\times \mathbb{S}^1\times \mathbb{S}^1$ is $\epsilon$-close to a homeomorphism $F_i\colon X_i\times \mathbb{S}^1\times \mathbb{S}^1\rightarrow X\times \mathbb{S}^1\times \mathbb{S}^1$. Thus, we obtain, for large $i,j$, a homeomorphism $H\colon X_i\times \mathbb{S}^1\times \mathbb{S}^1 \rightarrow X_j\times \mathbb{S}^1\times \mathbb{S}^1$ such that $F_j\circ H=F_i$. Writing $\mathbb{T}^2$ as $\mathbb{S}^1\times \mathbb{S}^1$, for large $i,j$ consider the following diagram:

\begin{center}
\begin{tikzcd}
X_i\times \mathbb{S}^1\times \mathbb{R} \arrow[rr, "\hat{H}"] \arrow[d] &                                & X_j\times \mathbb{S}^1\times \mathbb{R} \arrow[d] \\
X_i\times \mathbb{T}^2 \arrow[rr, "H"] \arrow[rd, "F_i"]                &                                & X_j\times \mathbb{T}^2 \arrow[ld, "F_j"']         \\
                                                                        & (X\times \mathbb{S}^1)\times \mathbb{S}^1 \arrow[d] &                                                   \\
                                                                        & X\times \mathbb{S}^1           &                                                  
\end{tikzcd}
.
\end{center}

The vertical maps are projections and the map $\hat{H}$ is a lift. As in \cite{GPW,QuinnEndsofMaps}, choose $t_0\in \mathbb{R}$ such that $\hat{H}_i(X_i\times \mathbb{S}^1\times 0)$ does not meet $X_j\times \mathbb{S}^1\times t_0$. Let $W$ denote the region between $\hat{H}_i(X_i \times \mathbb{S}^1\times 0)$ and $X_j\times \mathbb{S}^1\times t_0$. Indeed, following \cite{QuinnEndsofMaps,GPW}, $W$ is a $(\delta(\epsilon),h)$-cobordism and is $(\delta(\epsilon),1)$-connected over $X\times \mathbb{S}^1$, where $\delta(\epsilon)\rightarrow 0$ as $\epsilon\rightarrow 0$ (cf.\ also \cite[p.\ 394]{Wustructureofalmostnonnegative}).

Thus, provided $\epsilon$ is small enough, for any $\gamma>0$,  there exists a homeomorphism $h_i\colon \hat{H}_i(X_i\times \mathbb{S}^1\times 0)\times I\rightarrow W$ that is a $\gamma$-product over  $X\times \mathbb{S}^1$ and such that $h_i|_{\hat{H}_i(X\times \mathbb{S}^1\times 0)\times 1}$ is the identity. As homeomorphisms preserve boundaries, we obtain the following $\gamma$-commutative diagram (see Definition \ref{almost-commutativediagram}):

\begin{center}
\begin{tikzcd}
X_i\times \mathbb{S}^1 \arrow[rr, "\hat{h}_0"] \arrow[rd] &                      & X_j\times \mathbb{S}^1 \arrow[ld] \\
                                                                                           & X\times \mathbb{S}^1 &                                                                  
\end{tikzcd}
.
\end{center}

Repeating the argument in the preceding paragraphs (see for example \cite[p. 395-396]{Wustructureofalmostnonnegative} or \cite[p. 211]{GPW,Grove-Petersen-Wu-erratum}), we can remove the last circle factor in the products in the preceding almost commutative diagram and obtain a homeomorphism $\hat{h}_1\colon X_i\rightarrow X_j$ such that the following diagram almost commutes with an ``error'' that is small (see Definition \ref{almost-commutativediagram}) for all large $i,j$:

\begin{center}
\begin{tikzcd}
X_i \arrow[rr, "\hat{h}_1"] \arrow[rd, "f_i"'] &   & X_j \arrow[ld, "f_j"] \\
                                                        & X &                               
\end{tikzcd}

\end{center}

Therefore, for every $\epsilon>0$, there exists an $i(\epsilon)$ such that for all $i,j\geq i(\epsilon)$, we have homeomorphisms $h_i\colon X_i\rightarrow X_j$ such that $d(f_j\circ h_i,f_i)<\epsilon$.  Following \cite[p. 444]{topologyofhomologymanifolds}, one sees that for $\epsilon_i\rightarrow 0$ such that $\sum_i\epsilon_i<\infty$, there exists a sequence $\{\eta_i\}_{i=1}^\infty$ and homeomorphisms $h_i\colon X_{\eta_i}\rightarrow X_{\eta_{i+1}}$ such that $d(f_{\eta_{i+1}}\circ h_i, f_{\eta_i})<\epsilon_i$. Thus, the sequence  $f_{\eta_i}\circ h_{i-1}\circ ....\circ h_0$ converges to a map $f$.  Since each $f_{\eta_i}\circ h_{i-1}\circ....\circ h_0$ is a $\delta_i$-equivalence, where $\delta_i\rightarrow 0$, it follows that $f$ is a CE-map. Hence, $f$ is a resolution. 

\end{proof}

\section{Proof of Corollaries~\ref{conjecturebymooreII} and \ref{obstructionvanishing}--\ref{DFWcor}}
\label{section proof of corollaries}

\begin{proof}[Proof of Corollary~\ref{conjecturebymooreII}]  Let $X_i= \omega(1-1/i)$ for $i=1,2,\ldots$ Since $\omega$ is continuous, $X_i\rightarrow_{GH}X$, where $X=\omega(1)$. Then, by \cite[Proposition 3.1]{Moore}, $X_i$ is homeomorphic to $\omega(0)$. By Theorem \ref{main thm}, $X$ is the cell-like image of some $X_i$. Since $X$ and $X_i$ are ANR's, it follows that $X$ is the cell-like image of $\omega(0)$.
\end{proof}

\begin{proof}[Proof of Corollary \ref{obstructionvanishing}]
Since $X^n$ is resolvable, there exists a manifold $M^n$ and a cell-like map $f\colon M^n\rightarrow X^n$. Note that $f$ is a controlled homotopy equivalence, i.e., an $\epsilon$-homotopy equivalence for all $\epsilon>0$ (see \cite{LacherI,LacherII}). Define $\Delta(X^n)$ to be the Kirby--Siebenmann invariant of $M^n$. 
Quinn's uniqueness of resolutions (Theorem \ref{uniquenessofresolutions}) asserts that any two resolutions of $X$ are homeomorphic. Therefore, $\Delta(X^n)$ does not depend on the choice of resolution and is well-defined. Hence, $\Delta(X)=0$ if and only if $X$ is resolvable by a PL-manifold. Hence the result now follows from Corollary \ref{approxbyPL}.
\end{proof}

\begin{proof}[Proof of Corollary \ref{corollary smoothable}]
We will show $(1)\implies (2)\implies (3)\implies(1)$. 

Suppose $X$ is resolvable by a smooth manifold $M$. Then, by  Theorem \ref{siebenmann's theorem}, $X\times \mathbb{R}^k$ is smoothable for $k\geq 2$.

Assume now that $X\times \mathbb{R}^k$ is smoothable for some $k\geq 2$. Then Theorem \ref{quin equiv} implies that $X$ is resolvable by a manifold $M$. Hence, Theorem \ref{siebenmann's theorem} once again yields that $X\times \mathbb{R}^k$ is homeomorphic to $M\times \mathbb{R}^k$. Since $X\times \mathbb{R}^k$ is smoothable, the Product Structure Theorem  (Theorem \ref{Product Structure Theorem}) implies that $M$ is smoothable. Thus (3) follows from applying Theorem \ref{ferryandokun}.  

Finally, assume $(3)$ holds. By Theorem \ref{main thm}, $X$ is resolvable by some $X_i$, and $X_i$ is by assumption smooth.
\end{proof}

We mention the following remark.
\begin{remark}
\label{low dimensions}

In dimensions $5\leq n \leq 7$, every PL manifold admits a smooth structure \cite{Milnor}. Thus, if $(X^n,d)$ is a compact geodesic metric space that is an ANR resolvable homology manifold, then in these dimensions, $\Delta(X^n) = 0$ if and only if $(X^n,d)$ is a Gromov--Hausdorff limit of Riemannian  $n$-manifolds with a uniform contractibility function.
\end{remark}

\begin{proof}[Proof of Corollary \ref{DFWcor}]  If $\epsilon$ is small enough, then by Theorem \ref{main thm},  there exist resolutions $c_1\colon M\rightarrow X$ and $c_2\colon N\rightarrow X$. Quinn's uniqueness of resolutions theorem (Theorem \ref{uniquenessofresolutions}) now implies that for any $\delta>0$,  there exists a homeomorphism $f\colon N\rightarrow M$ such that $c_1\circ f$ and $c_2$ are $\delta$-close. Since $X$ is an ANR, if $\delta$ is small enough, $c_1\circ f$ and $c_2$ are homotopic.
\end{proof}

\section{Examples and Further Applications}
\label{section examples}

\begin{exmp}
\label{counterexampleindimension4}
This example demonstrates the analog of Corollary \ref{corollary smoothable} is false in dimensions $4$. Indeed, observe that since the Kirby--Siebenmann invariant of $X=E_{8}\# E_{8}$ vanishes (see Example \ref{E_8mfld}), it follows that $X\times \mathbb{R}$ is smoothable. Consequently, by \cite[Theorem 7]{Bing}, or more generally \cite{bingII}, $X$ admits a geodesic metric. However, if we have closed connected Riemannian manifolds $\{X_i^4\}_{i=1}^\infty$ in some $\mathcal{M}(n,\rho)$ such that $X_i^4\rightarrow_{GH}X^4$, then by the $\alpha$-approximation theorem in dimension $4$ (see discussion following Theorem \ref{alphaapproximationtheorem}), $X^4$ is smoothable. This is a contradiction.

\end{exmp}

Recall that $\mathcal{M}(n,\rho)$ denotes the class of closed (metric) $n$-manifolds with a uniform contractibility function $\rho$ that are in $\mathrm{LGC}(\rho)$. We denote by $\mathcal{C}(n,\rho)$ an arbitrary precompact (in the Gromov--Hausdorff topology) subclass of $\mathcal{M}(n,\rho)$ consisting of all elements $X\in \mathcal{M}(n,\rho)$, such that the closure is compact, and the limit points are non-branching geodesic metric spaces admitting a qualitatively non-degenerate (Borel) measure with good transport behavior (see \cite{Kell-Studied,Garcia-Kell-Mondino,Cavalletti-Husemann,SR22} for definitions of these statements and more).

The next example combines several ideas in this paper by characterizing the property of being a finite dimensional homology manifold with the existence and convergence of a certain class of infinite dimensional spaces.

\begin{exmp}
Let $X^n$ be a compact Alexandrov space. Then $X$ is a homology manifold if and only if there exists $\{X_i\}_{i=1}^{\infty}$ in some $\mathcal{C}(n,\rho)$ such that  $\mathbb{P}_2(X_i)\rightarrow_{GH} \mathbb{P}_2(X)$.   The space $\mathbb{P}_2$ denotes the $2$-Wasserstein space (see \cite{LottandVillani}). The point here is that $\mathbb{P}_2(X)$ and $\mathbb{P}_2(X_i)$ have infinite Hausdorff dimension, whereas $X$ is finite dimensional. Indeed, we may suppose that $n\geq 4$.  Since $X$ is a manifold on an open and dense subset, $I(X)=1$. Here, $I(X)$ is Quinn's obstruction (see Theorem \ref{quinn'sobstruction}). Thus, the forward direction follows from combining Moore's theorem (Theorem \ref{moore'sthm}) with Corollary 4.3 in \cite{LottandVillani}. The converse follows from the results in \cite{me}. This example is true in all dimensions.

\end{exmp}

It is an open question as to whether every topological manifold admits an Alexandrov metric \cite[p.  4]{fernandoandmassoumeh}. However, the next example shows that there are plenty of resolvable finite dimensional homology manifolds that do not admit an Alexandrov metric. Namely, the next example shows that there are examples of homology manifolds with precisely one non-manifold point that cannot admit an Alexandrov metric.

\begin{exmp}\label{non-alexandrovmetric}
We will show that there exists a $3$-dimensional homology manifold that is resolvable, with only one non-manifold point that does not admit an Alexandrov metric. Indeed, let $\alpha$ be the Fox-Artin arc in $\mathbb{S}^3$. Then, $X=\mathbb{S}^3/\alpha$ is a homology $3$-dimensional manifold that is not a manifold \cite{Moore,Mooresthesis}. We give a short proof that $X$ cannot admit an Alexandrov metric using the theory of stratified spaces. If $X$ were to admit such a metric, it follows from the main result in \cite{fujioka2024alexandrov}, that $X$ is a CS set in the sense of Siebenmann \cite{siebenmann1972deformation}. However, it follows from \cite[Theorem 2]{henderson} that $X$ will thus consist of only manifold points. However, this cannot happen, since $X$ has a non-manifold point (only one). One can also use the results in \cite{Wu} as a substitute to \cite{henderson}.

\end{exmp}

\begin{remark}
    The argument above can also be used to give another more general proof of a result due to Wu \cite{Wu}. Namely that if $X$ is an Alexandrov space that is a homology manifold then $\mathrm{dim}S(X)\leq 0$, where $\mathrm{S}(X)$ denotes the set of singular points (i.e. non-manifold points).

    \end{remark}

The authors thank Phillip Reiser for bringing to their attention the content of the following example, which further reinforces the fact that being approximated by manifolds with a uniform contractibility function is a flexible notion.

\begin{exmp}
\label{vitali}
This example emphasizes two things, first that the phenomenon of convergence with a uniform contractibility function is rather flexible. Second, this example demonstrates that the approximating manifolds in Corollary \ref{corollary smoothable} cannot be taken to have a uniform lower (sectional) curvature bound, even if $X$ is a sufficiently nice topological space. More precisely, there exists an Alexandrov space $X^n$ which cannot be obtained as a Gromov--Hausdorff limit of Riemannian manifolds $X_i^n$ such that $\mathrm{sec(X_i)}\geq k'$, but which can be obtained as the Gromov--Hausdorff limit of Riemannian $n$-manifolds with a uniform contractibility function. Moreover, the Alexandrov metric on $X$ can be uniformly approximated by Riemannian metrics. Let $\Sigma^3$ be the Poincaré homology sphere. Consider the double suspension $S^2 \Sigma^3$. Then, $S^2\Sigma^3$ is homeomorphic to $\mathbb{S}^5$ by Edward's double suspension theorem. According to \cite{kapovitch-regularity}, there exists an Alexandrov metric on $ S^2\Sigma^3$ which makes it non-smoothable (in the sense of \cite{kapovitch-regularity}). However, since $\mathrm{ks}(S^2\Sigma^3)=0$, the non-smoothable metric on $S^2\Sigma^3$ can be approximated uniformly by Riemannian metrics with a uniform contractibility function \cite{ferryandborus}.
\end{exmp}

\begin{exmp}
\label{diffeostabilitycounterexample}
The diffeomorphism stability conjecture \cite{grovewilhelm,pro2020stability} is false if one replaces the lower curvature bound condition with a uniform contractibility function condition. This is a natural generalization of the diffeomorphism stability conjecture in view of the fact that the class consisting of closed Riemannian $n$-manifolds with sectional curvature bounded below by $k$, diameter bounded above by $D>0$, and volume bounded below by $v>0$, admits a uniform contractibility function (see \cite{GPW}).

The technique used to demonstrate this is used in the first proof of Theorem \ref{main thm}. More precisely, let $\Sigma^7$ be an exotic $7$-sphere and let $\mathbb{S}^7$ be the unit round sphere. By Theorem \ref{ferryandokun}, there exists a sequence of Riemannian metrics $g_i$ on $\Sigma^7$ such that $(\Sigma^7,g_i)\rightarrow_{GH} \mathbb{S}^7$ and the sequence $\{(\Sigma^7,g_i)\}_{i=1}^\infty$ has a uniform contractibility function $\rho_1$. On the other hand, $\mathbb{S}^7$ has a uniform contractibility function $\rho_2$. Thus intertwining the sequences as in the first proof of Theorem \ref{main thm}, one gets a sequence $\{Z_{i}^n\}_{i=1}^{\infty}$ consisting of smooth $n$-manifolds with a uniform contractibility function, such that $Z_i^n\rightarrow_{GH}\mathbb{S}^7$,  and such that for all large $i,j$, $Z_i$ and $Z_j$ homeomorphic, but it is not true that for all large $i,j$, $Z_i$ and $Z_j$ are diffeomorphic.

Notice that although the exotic sphere is not diffeomorphic to the standard sphere, it is smoothly homeomorphic to it. In other words, there exists a homeomorphism between $\Sigma^7$ and $\mathbb{S}^7$ that is smooth in one direction but not the other (see for example the introduction in \cite{SiebenmannScharlemannII}).

In light of this, it is thus natural to inquire whether, in the generalized diffeomorphism stability above, one could replace diffeomorphisms between the terms of the sequence,  by smooth homeomorphisms between the approximating manifolds in the tail of the sequence. This is also not true. In \cite{SiebenmannScharlemannII}, the authors use a closed $6$-dimensional manifold ${T}(\beta)$, due to Siebenmann \cite{topmnfldssiebenmann}, that is homeomorphic to the $6$ dimensional Torus, yet is not diffeomorphic to it. They showed that a smooth homeomorphism  $T(\beta)\rightarrow \mathbb{T}^6$ would then result in a diffeomorphism between the two manifolds. Hence using these two manifolds, the results of Ferry and Okun \cite{ferryandborus} and the ``alternating trick'' above,  one shows that this inquiry is false.

\end{exmp}

\printbibliography

\end{document}